\newcommand{\mybox}{\hfill $\square$}
\newcommand{\+}[1]{\ensuremath{\overset{+}{#1}}}
\newcommand{\pli}[1]{\ensuremath{\overset{+}{\imath}}}
\newcommand{\mi}[1]{\ensuremath{\overset{-}{#1}}}
\newcommand{\sym}{\ensuremath{\mathrm{Sym}}}
\newcommand{\qed}{\hfill $\square$ \\}
\newcommand{\zz}{\mathbb{Z}}
\newtheorem{thm}{Theorem}[section]
\newtheorem{lemma}[thm]{Lemma}         %and begin lemma etc for all this lot.
\newtheorem{conj}[thm]{Conjecture}
\newtheorem{defn}[thm]{Definition}
\newtheorem{cor}[thm]{Corollary}
\newenvironment{proof}{\paragraph{Proof}}{\mybox}
\begin{document}
    \title{\Large\textbf{Involution Statistics in Finite Coxeter Groups}}
\date{} %\author{Sarah}
      \author{S.B. Hart and P.J. Rowley
\thanks{The authors wish to acknowledge support from
       a London Mathematical Society `Research in Pairs' grant and
       from the Department of Economics, Mathematics and Statistics at Birkbeck.
      }}
  \maketitle
\vspace*{-10mm}

\abstract{Let $W$ be a finite Coxeter group and $X$ a subset of $W$.
The length polynomial $L_{W,X}(t)$ is defined by $L_{W,X}(t) =
\sum_{x \in X} t^{\ell(x)}$, where $\ell$ is the length function on
$W$. In this article we derive expressions for the length polynomial
where $X$ is any conjugacy class of involutions, or the set of all
involutions, in any finite Coxeter group $W$. In particular, these
results correct errors in \cite{dukes} for the involution length
polynomials of Coxeter groups of type $B_n$ and $D_n$. Moreover, we
give a counterexample to a unimodality conjecture stated in
\cite{dukes}.}

\section{Introduction}

The purpose of this article is to derive statistics about the
distribution of lengths of involutions in Coxeter groups. The {\em
length polynomial}, $L_{W,X}(t)$, where $W$ is a finite Coxeter
group and $X \subseteq W$, is the principal object of study here. It
is defined by $$L_{W,X}(t) = \sum_{x\in X} t^{\ell(x)},$$ where
$\ell$ is the length function on $W$. If $X = W$, this is the
well-known Poincar\'{e} polynomial (see 1.11 of \cite{humphreys}).
For the special case where $X = \{x \in W: x^2 =1\}$ is the set of
involutions in $W$ together with the identity element, we write
$L_W(t)$ for the polynomial
$$L_W(t) = L_{W,X}(t) = \sum_{x \in X} t^{\ell(x)}.$$ We refer to
$L_W(t)$ as the
{\em involution length polynomial} of $W$.\\

 In this
article we obtain expressions for the length polynomials of all
conjugacy classes of involutions in finite Coxeter groups (and hence
for the sets of all involutions in these groups). For type $A$ this
is known \cite{desarmenien}, but we could only find statements, not
proofs, in the literature, so we have included a proof here. In
\cite{dukes} expressions for $L_{W(B_n)}(t)$ and $L_{W(D_n)}(t)$ are
given, but unfortunately the proofs contain errors which lead to the
results being incorrect. Finally we remark that the length
polynomial for the special case where $X$ is the set of reflections
in a Coxeter group $W$ has been studied in another guise
\cite{deman}. Theorem 4.1 of \cite{deman} gives the generating
function for counting the depth of roots of a Coxeter system of
finite rank. The number of reflections of length $\ell$ in a Coxeter
group equals the number of positive roots of depth
$\frac{\ell+1}{2}$, and so the polynomial $L_{W,X}(t)$, where $X$ is
the set of reflections, may be obtained.
\\

There are two main motivations for this work. The first is that
there are known results counting involutions in the symmetric group
that have a given number of inversions. Of course the symmetric
group is a Coxeter group of type $A$, and it is well known that the
number of inversions of an element is equal to its length, in the
Coxeter group context. Thus it is natural to wonder about
generalisations of results on inversion numbers in symmetric groups
to results relating to lengths in Coxeter groups.\\

The second motivation is that Coxeter groups have a very special
relationship with involutions. They are generated by involutions
(known as fundamental reflections). The set of reflections of a
Coxeter group (conjugates of fundamental reflections) is in
one-to-one correspondence with its set of positive roots. For a
Coxeter group of rank $n$, every involution can be expressed as a
product of at most $n$ orthogonal reflections. Every element of a
finite Coxeter group can be expressed as a product of at most two
involutions \cite{carter}. Moreover, the conjugacy classes of
involutions have a particularly nice structure. Due to a result of
Richardson \cite{richardson}, it is straightforward to determine
them from the Coxeter graph. The involutions of minimal and maximal
length in a conjugacy class are well understood \cite{prinv2}, and
the lengths of involutions behave well with respect to conjugation,
in that if $x$ is an involution and $r$ is a fundamental reflection,
then either $\ell(rxr) = \ell(x) + 2$, or $\ell(rxr) = \ell(x) - 2$,
or $rxr = x$. The involutions of minimal length in a conjugacy class
are central elements of parabolic subgroups, and so are fairly
easily counted. So again it is natural to ask what can be determined
about the length distribution of involutions, both in a conjugacy
class and in
a Coxeter group as a whole.\\

We may now state our results. Let $n, m$ and $\ell$ be integers. We
define $\alpha_{n,m,\ell}$ to be the number of involutions in
$W(A_{n-1}) \cong \sym(n)$ of length $\ell$, having $m$
transpositions. We adopt the convention that the identity element is
an involution, so that we have $\alpha_{n,0,0} = 1$. Note that
$\alpha_{n,0,\ell} = 0$ for all $\ell \neq 0$ and
$\alpha_{n,m,\ell} = 0$ for any $\ell < 0$.\\

We define the following polynomial
$$L_{n,m}(t) = \sum_{\ell=0}^{\infty} \alpha_{n,m,\ell}t^\ell.$$

This is essentially a shorthand for $L_{W(A_{n-1}),X}(t)$ where $X$
is the set of involutions (including the identity) whose expression
as a product of disjoint cycles contains precisely $m$
transpositions.

\begin{thm}\label{sn}
Let $n \geq 3$ and $m \geq 1$. If $\ell < m$ then $\alpha_{n,m,\ell}
= 0$. If $\ell \geq m$ then
 $$\alpha_{n,m,\ell} =  \alpha_{n-1,m,\ell} + \sum_{k=1}^{n-1} \alpha_{n-2,m-1,\ell + 1 -
 2k}.$$
Moreover $L_{n,0}(t) = 1$, $L_{1,1}(t) = 0$, $L_{2,1}(t) = t$,
$L_{n,m}(t) = 0$ for $n < 2m$, and for all other positive integers
$n, m$,
$$L_{n,m}(t) = L_{n-1,m}(t) +
\frac{t(t^{2n-2}-1)}{t^2-1}L_{n-2,m-1}(t).$$
\end{thm}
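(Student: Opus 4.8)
The plan is to prove the integer recurrence for the $\alpha_{n,m,\ell}$ first, by a direct combinatorial argument inside $\sym(n)$, and then to read off every polynomial assertion as a formal consequence of that recurrence together with a few trivial base cases. Throughout I use that the length of a permutation equals its number of inversions.

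For the recurrence, fix an involution $\sigma\in\sym(n)$ with exactly $m$ transpositions and split according to the value $\sigma(n)$. If $\sigma(n)=n$, then $\sigma$ restricts to an involution of $\sym(n-1)$ with $m$ transpositions, the point $n$ contributes no inversions, and the length is unchanged; these $\sigma$ are counted by $\alpha_{n-1,m,\ell}$. Otherwise $\sigma(n)=j$ with $1\le j\le n-1$, so $(j\,n)$ is one of the transpositions of $\sigma$ and $\sigma$ fixes the set $\{1,\dots,n-1\}\setminus\{j\}$. Deleting $j$ and $n$ and relabelling $\{1,\dots,n-1\}\setminus\{j\}$ by the order-preserving bijection onto $\{1,\dots,n-2\}$ produces an involution $\sigma'\in\sym(n-2)$ with $m-1$ transpositions; once $j$ is specified this construction is reversible, so for each fixed $j$ it is a bijection. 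The crucial input is the behaviour of length, namely $\ell(\sigma)=\ell(\sigma')+2(n-j)-1$ (justified in the next paragraph). Writing $k=n-j$ and summing over $k=1,\dots,n-1$, and noting that $\sigma'$ then has length $\ell+1-2k$, gives exactly
$$\alpha_{n,m,\ell} = \alpha_{n-1,m,\ell} + \sum_{k=1}^{n-1}\alpha_{n-2,m-1,\ell+1-2k}.$$
The vanishing $\alpha_{n,m,\ell}=0$ for $\ell<m$ then follows by induction on $n$ from this recurrence, the base cases $n=1,2$ being immediate.

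The length identity is the one delicate point, and presumably where the corresponding computation in \cite{dukes} goes wrong. Because the relabelling is order-preserving, the inversions of $\sigma$ lying entirely inside $\{1,\dots,n-1\}\setminus\{j\}$ number exactly $\ell(\sigma')$, and since $\sigma$ permutes that set it remains only to count inversions of $\sigma$ involving position $j$ or position $n$. A short case check shows that $(a,n)$ is an inversion for $a=j$ and for exactly $n-1-j$ of the other positions $a$ (those $a$ with $\sigma(a)>j$), that no pair $(a,j)$ with $a<j$ is an inversion since $\sigma(j)=n$, and that $(j,b)$ is an inversion for every $b$ with $j<b<n$; in total $\bigl(1+(n-1-j)\bigr)+(n-1-j)=2(n-j)-1$. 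I would present this enumeration in full, as it is the heart of the argument and the step most prone to off-by-one errors.

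Finally, for the polynomial statements, $L_{n,0}(t)=1$ (only the identity), $L_{1,1}(t)=0$, $L_{2,1}(t)=t$, and $L_{n,m}(t)=0$ for $n<2m$ (one cannot place $m$ disjoint transpositions on fewer than $2m$ points) are immediate from the definitions. For $n\ge 3$ and $m\ge 1$, multiply the recurrence by $t^{\ell}$ and sum over $\ell$; interchanging the order of summation and, in the inner sum, substituting $\ell'=\ell+1-2k$ turns the $k\thth$ term into $t^{2k-1}L_{n-2,m-1}(t)$. Summing the geometric series $\sum_{k=1}^{n-1}t^{2k-1}=t(t^{2n-2}-1)/(t^2-1)$ then yields
$$L_{n,m}(t) = L_{n-1,m}(t) + \frac{t(t^{2n-2}-1)}{t^2-1}\,L_{n-2,m-1}(t),$$
which is the claimed identity; note it is also consistent with the case $n<2m$, where $L_{n-1,m}$ and (when $n-2<2(m-1)$) $L_{n-2,m-1}$ likewise vanish. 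The main obstacle in the whole proof is simply carrying out the inversion bookkeeping of the third paragraph exactly; once $\ell(\sigma)=\ell(\sigma')+2(n-j)-1$ is in hand, the rest is routine.
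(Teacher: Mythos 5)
Your proof is correct, and its combinatorial skeleton is exactly the one used in the paper: partition the involutions of $\sym(n)$ by the image of $n$, peel off the transposition interchanging $j$ and $n$, relabel the remaining $n-2$ points order-preservingly, record a length shift of $2(n-j)-1$, and then pass to generating functions. The only genuine difference is how that length shift is established. You verify it by a direct inversion count, and your bookkeeping is right: $n-j$ inversions of the form $(a,n)$ (one from $a=j$ plus the $n-1-j$ positions with $\sigma(a)>j$), none of the form $(a,j)$ with $a<j$, and $n-1-j$ of the form $(j,b)$ with $j<b<n$, for a total of $2(n-j)-1$. The paper instead deduces the same shift from Corollary \ref{tech3}, which rests on two root-system lemmas (Theorems \ref{tech1} and \ref{tech2}) about the sets $\Lambda(w)$ of long positive roots sent negative; your ``delete and relabel'' map appears there as multiplication by $\tau$ followed by conjugation by the cycle $c_r$. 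For type $A$ your route is more elementary and self-contained; the paper carries the heavier machinery because those same lemmas are reused verbatim for the $B_n$ and $D_n$ recurrences, where signed cycles make a bare inversion count considerably messier. Since for $\sym(n)$ the number of inversions equals $|N(w)|=\ell(w)$, the two computations are ultimately the same, and your induction for the vanishing $\alpha_{n,m,\ell}=0$ when $\ell<m$ and your summation of the geometric series are both sound.
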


For example, to find the number of double transpositions of length 6
in $W(A_{4}) \cong \sym(5)$, we calculate
\begin{eqnarray*}\alpha_{5,2,6} &=& \alpha_{4,2,6} + [\alpha_{3,1,5}
 + \alpha_{3,1,3} + \alpha_{3,1,1} + \alpha_{3,1,-1}] \\ &=& 1 + [0 + 1 + 2 + 0] = 4.\end{eqnarray*}

Note that \cite{desarmenien}, for example, defines a related
polynomial $I_n(x,q)$ in two variables, $x$ and $q$, where the
coefficient of $q^jx^k$ is the number of involutions in $\sym_n$
with $k$ fixed points and $j$ inversions (that is, length $j$). He
states ``on v\'{e}rifie ais\'{e}ment que ces polyn\^{o}mes satisfont
la r\'{e}currence $I_0(x,q) = 1$, $I_1(x,q) = x$, $I_{n+1}(x,q) =
xI_n(x,q) + \frac{1-q^{2n}}{1-q^2}qI_{n-1}(x,q)$, $n\geq 1$''. Of
course one may derive Theorem \ref{sn} from this statement, but we
felt it may be helpful to include a direct proof here, especially as
our result for $W(A_{n-1}) \cong \sym(n)$ is arrived at as a subcase
of a
general argument that also covers types $B_n$ and $D_n$.\\

 Our
results for types $B_n$ and $D_n$ are as follows. (A detailed
description of these groups as groups of permutations will be given
in Section 2.) Let $\beta_{n,m,e,\ell}$ be the number of involutions
of length $\ell$ in $W(B_n)$ whose expression as a product of
disjoint signed cycles contains $m$ transpositions and $e$ negative
1-cycles. Also let $\delta_{n,m,e,\ell}$ be the number of
involutions of length $\ell$ in $W(D_n)$ whose expression as a
product of disjoint signed cycles contains $m$ transpositions and
$e$ negative 1-cycles.
 Note that $\beta_{n,0,0,0} = \delta_{n,0,0,0} = 1$, as we include the identity element in our count. Let $$L_{n,m,e}(t) =
 \left\{ \begin{array}{ll} \sum_{\ell=0}^{\infty} \beta_{n,m,e,l}t^{\ell}
 & {\mbox{ when $W = W(B_n)$}};\\
 \sum_{\ell=0}^{\infty}
\delta_{n,m,e,\ell}t^{\ell}
 & {\mbox{ when $W = W(D_n)$}}.\end{array}\right.$$

Again, $L_{n,m,e}(t)$ is another way of writing $L_{W,X}(t)$ where
$X$ is the appropriate involution class of $W$, for $W$ of type
$B_n$ or $D_n$.\\

 It is not common to work with
the groups of type $B_1, D_1$, $D_2$ and $D_3$, because they are
isomorphic to certain groups of type $A$ (or in the case of $D_2$,
to $A_1 \times A_1$), but there is a natural definition in line with
the usual definitions (see Section 2) of $W(B_n)$ and $W(D_n)$ (for
example $W(D_3)$ is the group of positive elements of $W(B_3)$).
Therefore, for the purposes of recursion, we will work with
$L_{n,m,e}(t)$ for $W$ of types $B_1, D_1, D_2$, and $D_3$. We have
the following two theorems.

\begin{thm}\label{bn}
Suppose $W = W(B_n)$, and let $n \geq 3$, $m \geq 0$ and $e \geq 0$.
If $\ell < m + e$ then $\beta_{n,m,e,\ell} = 0$. If $\ell \geq m +
e$ then
$$\beta_{n, m,e, \ell} = \beta_{n-1,m,e,\ell} + \beta_{n-1,m,e-1,\ell+ 1-2n} +
\sum_{k=1}^{2n-2} \beta_{n-2,m-1,e,\ell + 1 - 2k}.$$

Moreover $L_{n,0,0}(t) = 1$ for all positive integers $n$,
$L_{1,0,1}(t) = t$, $L_{2,1,0}(t) = t + t^3$, $L_{2,0,1}(t) = t +
t^3$, $L_{2,0,2}(t) = t^4$, $L_{n,m,e}(t) = 0$ whenever $n < 2m +
e$, and for all $n \geq 3$ and $m, e \geq 0$,
$$L_{n,m,e}(t) = L_{n-1, m,e}(t) +
t^{2n-1}L_{n-1,m,e-1}(t) + \frac{t(t^{4n-4}-1)}{t^2-1}
L_{n-2,m-1,e}(t).$$
\end{thm}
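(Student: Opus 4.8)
The plan is to set up a bijective/counting recursion by considering how an involution $w$ in $W(B_n)$ interacts with the last coordinate $n$, exactly as one does for the Poincaré polynomial but keeping track of the cycle-type parameters $m$ (transpositions) and $e$ (negative 1-cycles). First I would fix an explicit model of $W(B_n)$ as signed permutations of $\{\pm 1,\dots,\pm n\}$ together with the standard formula for the length function $\ell(w)$ in terms of ordinary inversions, negative entries, and "negative sum" pairs (this is stated in, e.g., Björner–Brenti, and Section 2 of the paper promises the needed description). An involution $w$ either (i) fixes $n$; (ii) sends $n\mapsto -n$ (a negative 1-cycle); or (iii) pairs $n$ with some $j\in\{1,\dots,n-1\}$ in a signed transposition $(n\ j)$ or $(n\ \overline{\jmath})$. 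In case (i), deleting coordinate $n$ gives an involution of $W(B_{n-1})$ of the same length and the same $(m,e)$, contributing $\beta_{n-1,m,e,\ell}$. In case (ii), removing the negative 1-cycle at $n$ gives an involution of $W(B_{n-1})$ with parameters $(m,e-1)$, and one must compute the exact length contribution of having $n\mapsto -n$ appended; the recursion asserts this shift is $2n-1$, i.e.\ $\beta_{n,m,e,\ell}$ picks up $\beta_{n-1,m,e-1,\ell+1-2n}$ (equivalently the generating function is multiplied by $t^{2n-1}$). In case (iii), removing the signed 2-cycle involving $n$ leaves an involution of $W(B_{n-2})$ on the remaining $n-2$ symbols with parameters $(m-1,e)$, and summing over the $2(n-1)$ possible partners (with the two signs) and their possible relative positions produces the sum $\sum_{k=1}^{2n-2}\beta_{n-2,m-1,e,\ell+1-2k}$, whose generating-function avatar is multiplication by $t(t^{4n-4}-1)/(t^2-1) = t(1+t^2+\cdots+t^{4n-6})$.

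The core of the argument is the length bookkeeping in cases (ii) and (iii): I need to show that when $n$ is removed, the length of $w$ decomposes as the length of the smaller involution plus a term depending only on $n$ (in case (ii)) or on $n$ and a single "position" parameter ranging over $2n-2$ values (in case (iii)), and that this decomposition is independent of the rest of $w$. Concretely, using the inversion-count formula for $B_n$-length, I would argue that moving from $W(B_{n-2})$ (or $W(B_{n-1})$) up to $W(B_n)$ by inserting the block involving $n$ contributes a fixed number of new inversions/negative-pair terms determined by where the partner $j$ sits and by the sign, and that crucially no inversions among the old symbols are created or destroyed — this is the standard "the new symbol $n$ is largest" phenomenon, and the signed 2-cycle $(n\ \arb{\jmath})$ being an involution forces a symmetric pattern whose contribution can be computed once and for all. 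The generating-function recursion then follows immediately by multiplying the per-case length shifts into $L_{n,m,e}(t)$ and summing the three cases.

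Finally I would dispose of the base cases and degenerate ranges. The bound $\ell\ge m+e$ holds because each transposition and each negative 1-cycle contributes at least $1$ to the length while fixed points contribute $0$, and an involution of $W(B_n)$ with $m$ transpositions and $e$ negative 1-cycles needs $2m+e\le n$ symbols, giving $L_{n,m,e}(t)=0$ for $n<2m+e$. The small cases $L_{1,0,1}(t)=t$, $L_{2,1,0}(t)=t+t^3$, $L_{2,0,1}(t)=t+t^3$, $L_{2,0,2}(t)=t^4$, and $L_{n,0,0}(t)=1$ are checked by directly listing the relevant signed permutations and their lengths in $W(B_1)$ and $W(B_2)$; these also serve to seed the recursion for $n\ge 3$. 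The main obstacle I anticipate is getting the length shift in case (iii) exactly right — in particular confirming that the relevant position parameter really ranges over precisely $\{1,\dots,2n-2\}$ with each value attained once (so that the geometric-series factor has exactly $2n-3$ as its top exponent inside the bracket, i.e.\ $t^{4n-6}$), rather than an off-by-one or an off-by-a-factor-of-two error of the kind the paper says afflicts \cite{dukes}. I would double-check this against the stated numerical identity $L_{n,m,e}(t)=L_{n-1,m,e}(t)+t^{2n-1}L_{n-1,m,e-1}(t)+\frac{t(t^{4n-4}-1)}{t^2-1}L_{n-2,m-1,e}(t)$ for $n=2,3$ before trusting the general derivation.
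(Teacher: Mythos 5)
Your decomposition is the same as the paper's: classify an involution $x \in W(B_n)$ by the cycle $\tau$ containing $n$ (fixed point, negative $1$-cycle $(\mi n)$, or a transposition $(\+ r \+ n)$ or $(\mi r \mi n)$), remove $\tau$, and record the resulting length shift; your treatment of the base cases, of the bound $\ell \ge m+e$, of the vanishing for $n < 2m+e$, and of the passage from the counting recursion to the generating-function recursion all match the paper. The gap is exactly at the point you flag as the main obstacle, and the justification you offer for it does not suffice. The ``new symbol $n$ is largest'' argument settles only case (ii): there all $2n-1$ positive roots involving $e_n$ are made negative by appending $(\mi n)$, independently of the rest of $x$. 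In case (iii) the partner $r$ is not extremal, and if one sets $y = x\tau$ (so $y$ fixes both $r$ and $n$), the paper's Theorem \ref{tech1} shows that $|\Lambda(x)| = 2(n-r)-1 + |\Lambda(y)| - 2\Delta_r(y)$, where the correction $\Delta_r(y)$ genuinely depends on the rest of the involution; so simply deleting $\tau$ does \emph{not} shift length by a constant, contrary to your claim that the contribution ``can be computed once and for all'' at that stage.

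The missing idea is the second step of the paper's argument: one must also compress $y$ by conjugating by $c_r = (\+ n \ \+{n-1} \cdots \+ r)$, and Theorem \ref{tech2} shows this conjugation changes $|\Lambda|$ by exactly $-2\Delta_r(y)$, so the two corrections cancel and the composite map $x \mapsto (x\tau)^{c_r}$ lands in $W(B_{n-2})$ with length shifted by the constant $2(n-r)-1$ (Corollary \ref{tech3}), respectively $2(n+r)-3$ for $\tau = (\mi r \mi n)$ (Corollary \ref{tech4}, the extra $2$ coming from $\Sigma$). As $r$ runs over $1,\ldots,n-1$ these two families of shifts give the odd numbers $1,3,\ldots,2n-3$ and $2n-1,\ldots,4n-5$, which is precisely where $\sum_{k=1}^{2n-2} t^{2k-1} = t(t^{4n-4}-1)/(t^2-1)$ comes from. (An alternative to the compression step is a direct count showing that exactly $2(n-r)-1$ roots involving $e_r$ or $e_n$ lie in $N(x)$, using that $x$ restricted to the remaining $n-2$ symbols is a signed bijection of them; but some such computation must actually be carried out, and since this bookkeeping is exactly where \cite{dukes} goes wrong, it cannot be waved through.)
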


Before we can state the next theorem we define a further polynomial
$D_{n,m,e}(t)$, where $n,m$ and $e$ are non-negative integers. Set
$D_{n,0,0}(t) = 1$ and if $2m + e > n$, define $D_{n,m,e}(t) = 0$.
Set $D_{1,0,1}(t) = t$, $D_{2,1,0}(t) = 2t$, $D_{2,0,1}(t) =  1 +
t^2$, $D_{2,0,2}(t) = t^2$, and for all $n \geq 3$,

$$D_{n,m,e}(t) = D_{n-1, m,e}(t) + t^{2n-2}D_{n-1,m,e-1}(t) +
\frac{t(1+t^{2n-4})(t^{2n-2}-1)}{t^2-1} D_{n-2,m-1,e}(t). $$

\begin{thm}\label{dn} Suppose $W = W(D_n)$ and let $\ell$ be a non-negative integer.
Then $\delta_{n,m,e,\ell}$ is the coefficient of $t^{\ell}$ in
$D_{n,m,e}(t)$ when $e$ is even, and zero otherwise. Furthermore
$L_{n,m,e}(t) = D_{n,m,e}(t)$ when $e$ is even and $L_{n,m,e,}(t) =
0$ when $e$ is odd.
\end{thm}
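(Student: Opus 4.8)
The plan is to push everything into the hyperoctahedral group $W(B_n)$, equipped with a single non-negative statistic on its involutions, and to take over the bookkeeping of Theorem~\ref{bn} as a template. Realise $W(D_n)$ as the subgroup of $W(B_n)$ of signed permutations with an even number of negative entries. An involution of $W(B_n)$ whose signed-cycle decomposition has $e$ negative $1$-cycles and $m$ transpositions (each a positive or a negative $2$-cycle), of which $q$ are negative, has exactly $e+2q$ negative entries, so it lies in $W(D_n)$ if and only if $e$ is even. This gives the ``$e$ odd'' half of the theorem at once: $W(D_n)$ has no type-$(m,e)$ involution when $e$ is odd, hence $\delta_{n,m,e,\ell}=0$ and $L_{n,m,e}(t)=0$. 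When $e$ is even the type-$(m,e)$ involutions of $W(D_n)$ are precisely those of $W(B_n)$, and on them the $W(D_n)$-length is $\ell(w)=\mathrm{inv}(w)+\mathrm{nsp}(w)$, where $\mathrm{inv}(w)=\#\{\,i<j: w(i)>w(j)\,\}$ and $\mathrm{nsp}(w)=\#\{\,i<j: w(i)+w(j)<0\,\}$ (the standard description recalled in Section~2).

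It is therefore enough to show that the polynomial
\[
\widetilde D_{n,m,e}(t)\ :=\ \sum_{w}\, t^{\,\mathrm{inv}(w)+\mathrm{nsp}(w)},
\]
where $w$ runs over \emph{all} involutions of $W(B_n)$ with $m$ transpositions and $e$ negative $1$-cycles (for every $e$, even or odd), is equal to $D_{n,m,e}(t)$; taking $e$ even then gives $L_{n,m,e}(t)=D_{n,m,e}(t)=\sum_\ell\delta_{n,m,e,\ell}t^\ell$. As $D_{n,m,e}(t)$ is determined by its listed initial values and the recursion, I would verify that $\widetilde D$ has the same initial values — $\widetilde D_{n,0,0}=1$, $\widetilde D_{n,m,e}=0$ when $2m+e>n$, and the finitely many cases with $n\le 2$ — by enumerating the relevant involutions and computing $\mathrm{inv}+\mathrm{nsp}$, and that $\widetilde D$ obeys the recursion.

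For the recursion, classify an involution $w\in W(B_n)$ by the image of the largest point: $w(n)=n$; or $w(n)=-n$; or $n$ lies in a transposition $\{i,n\}$ with $1\le i\le n-1$ which is positive ($w(i)=n$) or negative ($w(i)=-n$). In the first two cases delete $n$; in the last, fix $i$ and the sign and delete both $i$ and $n$; then relabel the surviving points order-preservingly (this changes neither $\mathrm{inv}$ nor $\mathrm{nsp}$). One obtains, respectively, bijections onto the type-$(m,e)$ involutions of $W(B_{n-1})$, the type-$(m,e-1)$ involutions of $W(B_{n-1})$, and (for each fixed $i$ and sign) the type-$(m-1,e)$ involutions of $W(B_{n-2})$. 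The contribution of the deleted data to $\mathrm{inv}+\mathrm{nsp}$ is $0$ when $w(n)=n$, giving $\widetilde D_{n-1,m,e}(t)$; it is $2(n-1)$ when $w(n)=-n$, since one gains $n-1$ inversions and $n-1$ negative-sum pairs with the deleted point, giving $t^{2n-2}\widetilde D_{n-1,m,e-1}(t)$; and for a transposition $\{i,n\}$ it is $2n-2i-1$ in the positive case and $2n+2i-5$ in the negative case. Summing over the $n-1$ values of $i$ and the two signs,
\[
\sum_{i=1}^{n-1}\Bigl(t^{\,2n-2i-1}+t^{\,2n+2i-5}\Bigr)
=\frac{t\,(t^{2n-2}-1)}{t^2-1}+t^{2n-3}\,\frac{t^{2n-2}-1}{t^2-1}
=\frac{t\,(1+t^{2n-4})(t^{2n-2}-1)}{t^2-1},
\]
which is exactly the coefficient of $\widetilde D_{n-2,m-1,e}(t)$ in the stated recursion (the transposition term being empty when $m=0$, the middle term when $e=0$). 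Hence $\widetilde D$ satisfies the recursion, so $\widetilde D_{n,m,e}=D_{n,m,e}$, and the theorem follows.

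The one genuinely delicate step is computing the length increase contributed by a transposition $\{i,n\}$. When one writes down the new inversions and new negative-sum pairs created by inserting $\{i,n\}$, both counts acquire a term equal to the number of surviving points carrying a negative value of absolute value greater than $i$; the key observation is that this $w$-dependent quantity appears with opposite signs in the $\mathrm{inv}$-increment and the $\mathrm{nsp}$-increment and therefore cancels, leaving the clean, $w$-independent values $2n-2i-1$ and $2n+2i-5$. Pinning down this cancellation, and checking that deleting $\{i,n\}$ and relabelling is a bijection onto \emph{all} type-$(m-1,e)$ involutions of $W(B_{n-2})$, is where the effort goes; the overall induction and the treatment of the boundary cases mirror the proof of Theorem~\ref{bn}.
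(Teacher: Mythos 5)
Your approach is, in substance, the paper's own proof in different clothing: your $\widetilde D_{n,m,e}(t)$ is exactly the auxiliary polynomial $E_{n,m,e}(t)$ used there (since $\mathrm{inv}(w)+\mathrm{nsp}(w)=|\Lambda(w)|$), your case split on the image of $n$ is the same decomposition, and your ``delete $\{i,n\}$ and relabel order-preservingly'' step, with the cancellation between the inversion and negative-sum-pair increments, is precisely what the paper achieves by removing the cycle $\tau$ and then conjugating by $c_r$ (Theorems \ref{tech1} and \ref{tech2}, where $\Delta_r(y)$ is your cancelling quantity). Your computed increments $0$, $2n-2$, $2n-2i-1$ and $2n+2i-5$, and the resulting coefficient $\frac{t(1+t^{2n-4})(t^{2n-2}-1)}{t^2-1}$, all agree with the paper's, as does the parity argument disposing of odd $e$.

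One concrete warning about the step you deferred. If you actually enumerate the $n\le 2$ cases you will find $\widetilde D_{1,0,1}(t)=1$, not $t$: the unique type-$(0,1)$ involution of $W(B_1)$ is $(\mi 1)$, which has no inversions and no negative-sum pairs (equivalently, $B_1$ has no long roots, so $|\Lambda((\mi 1))|=0$). This clashes with the initial condition $D_{1,0,1}(t)=t$ printed in the definition of $D_{n,m,e}(t)$ before the theorem (and with the assertion $E_{1,0,1}(t)=t$ in the paper's proof), and the discrepancy is not confined to odd $e$, because odd-$e$ values feed into even-$e$ values through the middle term of the recursion. For instance, with $D_{1,0,1}(t)=t$ the recursion gives $D_{3,1,1}(t)=t^2+2t^4+2t^5+t^6$ and then $D_{4,1,2}(t)=t^3+t^5+2t^7+t^8+t^9+2t^{10}+3t^{11}+t^{12}$, which has coefficients in both parities and so cannot be the length polynomial of an involution class; with the value $1$ one gets $D_{4,1,2}(t)=t^3+t^5+3t^7+3t^9+4t^{11}$, which a direct check of the twelve involutions of type $(1,2)$ in $W(D_4)$ confirms. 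So your method is sound and, carried out honestly, detects that the stated initial condition should read $D_{1,0,1}(t)=1$; do the base-case enumeration explicitly and record this correction rather than asserting without checking that the listed initial values all match.
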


By summing the involutions of various types of a given length, we
can obtain the polynomials $f_W(t)$ for $W$ of types $A$, $B$ and
$D$. This result is Corollary \ref{all}. We postpone the statement
and proof until Section 4 because the $D_n$ case requires extra
notation.\\

This article is structured as follows: in Section 2 we give the
basic facts about presentations and root systems of the classical
Weyl groups, and describe the relationship between roots and length.
In Section 3 we prove the general results for classical Weyl groups,
which are then applied in Section 4 to each of types $A$, $B$ and
$D$ in turn. In Section 5 we give the length polynomials for
involution conjugacy classes in the exceptional finite Coxeter
groups. Finally in Section 6 we discuss briefly some conjectures
about unimodality of the sequences of coefficients of the
polynomials $L_{W,X}(t)$, where $X$ is either a conjugacy class of
involutions, or the set of involutions of even length, or the set of
involutions of odd length, in a finite Coxeter group $W$.

\section{Root Systems and Length}

Suppose $W$ is a Coxeter group, and $R$ its set of fundamental
reflections. Then $W = \langle R \rangle$. For any $w \in W$, the
{\em length} of $w$, denoted $\ell(w)$ is the length of any shortest
expression for $w$ as a product of fundamental reflections. That is
$$\ell(w) = \min\{k \in \zz: w = r_1r_2\cdots r_k, {\mbox{
some $r_1, \ldots, r_k \in R$}}\}.$$

By convention the identity element has length zero. If the choice of
Coxeter group $W$ is not clear from context, we will write either
$\ell_W$ or sometimes $\ell_{Y}$ if $W$ is of type $Y$. The length
function has been extensively studied, not least because of its link
to the root system, which we will now describe.\\

To every Coxeter group $W$ we may assign a root system $\Phi$, a set
of positive roots $\Phi^+$ and a set of negative roots
$\Phi^-:=-\Phi^+$, such that $\Phi$ is the disjoint union of the
positive and negative roots. Frequently we shall write $\rho > 0$ to
mean $\rho \in \Phi^+$ and $\rho < 0$ to mean $\rho \in \Phi^-$. The
Coxeter group $W$ acts faithfully on its root system. For $w \in W$,
define
$$N(w) = \{\alpha \in \Phi^+: w\cdot \alpha \in \Phi^-\}.$$

One of many connections between $\Phi$ and the length function
$\ell$ is the fact that $\ell(w) = |N(w)|$ (see, for example,
Section 5.6 of \cite{humphreys}).\\

Every finite Coxeter group is a direct product of irreducible finite
Coxeter groups. The finite irreducible Coxeter groups were
classified by Coxeter \cite{coxeter} (see also \cite{humphreys}).

\begin{thm}\label{classification} An irreducible finite Coxeter group is either of type
$A_n (n\geq 1)$, $B_n (n \geq 2)$, $D_n (n \geq 4)$, $E_6$, $E_7$,
$E_8$, $F_4$, $H_3$, $H_4$ or $I_n$ ($n \geq 5$).
\end{thm}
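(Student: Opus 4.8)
The plan is to reduce the classification to a question about certain labelled graphs. To a Coxeter system $(W,R)$, $R=\{r_1,\dots,r_n\}$, one attaches its \emph{Coxeter graph} $\Gamma$: the vertices are the $r_i$, and $r_i$ is joined to $r_j$ (by an edge labelled $m_{ij}$ when $m_{ij}\geq 4$) whenever the order $m_{ij}$ of $r_ir_j$ is at least $3$; here $W$ is irreducible precisely when $\Gamma$ is connected. On $V=\bigoplus_i \rr\alpha_i$ there is the canonical symmetric bilinear form $B$ with $B(\alpha_i,\alpha_j)=-\cos(\pi/m_{ij})$, and $W$ is realised as the group generated by the $B$-reflections in the $\alpha_i$. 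The essential input, for which I would cite \cite{coxeter} (or Chapter~6 of \cite{humphreys}), is that $W$ is finite if and only if $B$ is positive definite. So the theorem is equivalent to the statement that the connected Coxeter graphs with positive definite form $B$ are exactly those of the listed types.

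The engine is a monotonicity principle: if $\Gamma$ is positive definite, so is any labelled graph $\Gamma'$ obtained from $\Gamma$ by deleting vertices, deleting edges, or lowering edge labels. Deleting vertices just restricts $B$ to a coordinate subspace; for the other operations, letting $q,q'$ be the corresponding quadratic forms (diagonal entries $1$, off-diagonal entries $-\cos(\pi/m_{ij})\leq 0$, with $q'$ having the larger off-diagonal entries), the chain $q'(x)\geq q'(|x|)\geq q(|x|)>0$ for $x\neq 0$, verified coordinatewise, gives the claim. Contrapositively, a short list of ``minimal bad'' graphs cannot be subgraphs of a positive definite graph: the affine diagrams $\tilde A_n$ (a cycle), $\tilde D_4$ (the degree-$4$ star), $\tilde D_n$, $\tilde E_6,\tilde E_7,\tilde E_8$, $\tilde B_n,\tilde C_n,\tilde F_4,\tilde G_2$ --- each of which one checks by a determinant computation to have positive semidefinite Gram matrix of corank~$1$ --- together with a handful of genuinely indefinite diagrams (for instance the $5$-vertex path with a terminal label $5$) needed for the label-$5$ bookkeeping. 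Excluding $\tilde A_n$ makes $\Gamma$ a tree; excluding $\tilde D_4$ bounds every vertex degree by $3$; excluding $\tilde D_n$ allows at most one vertex of degree~$3$; and the remaining forbidden graphs force: at most one edge with label $\geq 4$, no such edge when a branch vertex is present, a label $\geq 6$ only on the single-edge graph $I_m$, and tight restrictions on where a label $4$ or $5$ may sit on a path.

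What is left is a finite enumeration. A tree with at most one vertex of degree~$3$ is a path or a ``$Y$'' with three arms, and after imposing the label restrictions the survivors are: the all-$3$-labelled path (type $A_n$); the path with one terminal label $4$ (type $B_n$, with $B_2=I_4$); the four-vertex path with a central label $4$ (type $F_4$); a path of at most four vertices with one terminal label $5$ (types $H_3,H_4$); the single edge with label $m$ (type $I_m$); and the $Y$-graphs with arm lengths $(1,1,k)$, $k\geq 1$ (type $D_n$), $(1,2,2)$ ($E_6$), $(1,2,3)$ ($E_7$), $(1,2,4)$ ($E_8$) --- exactly the arm lengths satisfying $\tfrac1{a+1}+\tfrac1{b+1}+\tfrac1{c+1}>1$, the equality cases $(2,2,2),(1,3,3),(1,2,5)$ being $\tilde E_6,\tilde E_7,\tilde E_8$. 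One then checks that each listed diagram really is positive definite (again via its Gram determinant) and removes the coincidences $I_3\cong A_2$, $I_4\cong B_2$ by the index ranges in the statement, arriving at the list $A_n\,(n\geq1)$, $B_n\,(n\geq2)$, $D_n\,(n\geq4)$, $E_6,E_7,E_8,F_4,H_3,H_4,I_n\,(n\geq5)$. I expect the bulk of the effort, and the only real chance for error, to lie in the combinatorial case analysis of labelled paths and $Y$-graphs together with the associated stack of Gram-determinant evaluations; the conceptual content is entirely in the positive-definiteness criterion and the monotonicity lemma.
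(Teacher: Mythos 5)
The paper does not prove this theorem at all: it is quoted as Coxeter's classification, with \cite{coxeter} and \cite{humphreys} cited in the surrounding text, so there is no in-paper argument to compare against. Your outline is precisely the standard proof from those references --- finiteness of $W$ equivalent to positive definiteness of the canonical form $B$, monotonicity of positive definiteness under deleting vertices or edges and lowering labels (your chain $q'(x)\geq q'(|x|)\geq q(|x|)>0$ is the right verification), exclusion of the affine and a few indefinite diagrams, and a finite enumeration of labelled paths and $Y$-graphs --- and it is sound; the steps you leave unexecuted (the Gram-determinant evaluations and the exhaustive case analysis, including the label-$5$ bookkeeping that caps the $H$-series at $H_4$) are routine and are carried out in full in \cite{humphreys}.
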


The next result (the proof of which is straightforward) shows that
for questions about conjugacy classes of involutions, or the set of
all involutions, or indeed the set of involutions of odd or even
length, in a finite Coxeter group, it is sufficient to analyse the
irreducible cases only.

\begin{lemma} Let $W$ be a finite Coxeter group, and $X$ a union of involution conjugacy classes
of $W$. Suppose that $W$ is isomorphic to the direct product $W_1
\times W_2 \times \cdots \times W_k$, where each $W_i$ is an
irreducible Coxeter group. Then $X$ is the direct product of sets
$X_1$, $X_2$, \ldots, $X_k$. Each $X_i$ is either $\{1\}$ or is a
union of involution conjugacy classes of $W_i$, and $L_{W,X}(t) =
\prod_{i=1}^k L_{W_i,X_i}(t)$.\end{lemma}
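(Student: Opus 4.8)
The plan is to reduce everything to two facts: first, that the isomorphism $W \cong W_1 \times \cdots \times W_k$ of Theorem~\ref{classification} can be taken to respect the Coxeter structure, so that the length function is additive across the factors; and second, that both ``being an involution'' and ``being conjugate'' are componentwise conditions in a direct product.

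For the first point I would recall that the decomposition into irreducibles arises by splitting the Coxeter graph of $W$ into its connected components, $R = R_1 \sqcup \cdots \sqcup R_k$, with $W_i = \langle R_i \rangle$ the corresponding standard parabolic subgroup; since fundamental reflections from distinct components commute, $W$ is the internal direct product of the $W_i$. Writing $\Phi$ as the disjoint union $\Phi_1 \sqcup \cdots \sqcup \Phi_k$ of the root subsystems spanned by the $R_i$, and using $\ell(w) = |N(w)|$, one checks that for $w = w_1 \cdots w_k$ with $w_i \in W_i$ we have $N(w) = \bigsqcup_i N(w_i)$, and hence $\ell_W(w) = \sum_{i=1}^k \ell_{W_i}(w_i)$.

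For the second point: $w = w_1 \cdots w_k$ satisfies $w^2 = 1$ if and only if $w_i^2 = 1$ for every $i$ (the factors commute and intersect trivially), and the conjugacy class of $w$ in $W$ is the product of the conjugacy classes of the $w_i$ in the $W_i$. Thus every involution conjugacy class of $W$ has the form $C_1 \times \cdots \times C_k$, where each $C_i$ is either $\{1\}$ or the conjugacy class of an involution of $W_i$; taking the union over the classes comprising $X$, and letting $X_i$ be the set of $i$-th components that occur, gives $X = X_1 \times \cdots \times X_k$ with each $X_i$ either $\{1\}$ or a union of involution conjugacy classes of $W_i$. (In every application in this paper $X$ is a single class or the full set of involutions of $W$, for which this product description is immediate.)

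Finally the length polynomial factors by the distributive law: using additivity of length,
\[
L_{W,X}(t) = \sum_{x_1 \in X_1} \cdots \sum_{x_k \in X_k} t^{\,\ell_{W_1}(x_1) + \cdots + \ell_{W_k}(x_k)} = \prod_{i=1}^{k} \Bigl( \sum_{x_i \in X_i} t^{\ell_{W_i}(x_i)} \Bigr) = \prod_{i=1}^{k} L_{W_i, X_i}(t).
\]
The only genuinely non-formal step is the additivity of the length function over the direct-product decomposition; everything else is bookkeeping, which is presumably why the authors call the proof straightforward. I expect the write-up to spend most of its effort making precise that the abstract isomorphism to a product of irreducibles is the one induced by the connected-component decomposition of the Coxeter graph, so that $\ell_W$ restricts correctly to each factor.
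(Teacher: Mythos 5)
The paper never writes this proof out -- the lemma is introduced with the remark that ``the proof of which is straightforward'' -- and your argument is precisely the straightforward one intended: decompose along the connected components of the Coxeter graph so that $W$ is the internal direct product of the standard parabolic subgroups $W_i=\langle R_i\rangle$, use the decomposition $\Phi=\Phi_1\sqcup\cdots\sqcup\Phi_k$ and the fact that elements of $W_i$ fix the roots of $\Phi_j$ for $j\neq i$ pointwise to get $N(w)=\bigsqcup_i N(w_i)$ and hence $\ell_W(w)=\sum_i\ell_{W_i}(w_i)$, observe that $w^2=1$ and conjugacy are componentwise conditions, and factor the generating function by distributivity. All of these steps are correct, and your closing remark that the only non-formal point is identifying the abstract product decomposition with the graph-component one is exactly right.

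The one step that does not hold as stated is ``taking the union over the classes comprising $X$, and letting $X_i$ be the set of $i$-th components that occur, gives $X=X_1\times\cdots\times X_k$.'' This is false for a general union of involution conjugacy classes: in $W=W_1\times W_2$ with $W_i\cong W(A_1)=\{1,s_i\}$, the set $X=\{(s_1,1),(1,s_2)\}$ is a union of two involution classes but is not a direct product of subsets, since any product containing both elements would also contain $(s_1,s_2)$ and $(1,1)$. To be fair, this defect is already present in the lemma as the paper states it; what your argument genuinely proves is that each single class is a product $C_1\times\cdots\times C_k$ with each $C_i$ either $\{1\}$ or an involution class of $W_i$, so that $L_{W,X}(t)$ is a sum over the constituent classes of products $\prod_i L_{W_i,C_i}(t)$, and the full product formula holds exactly when $X$ itself is product-closed -- a single class, or the set of all involutions together with the identity, which are the cases the paper actually uses. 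Your parenthetical acknowledges this, but it is presented as an aside rather than as the needed hypothesis; a clean write-up should either restrict $X$ accordingly or state the conclusion classwise.
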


We next discuss concrete descriptions of the Coxeter groups of types
$A_n, B_n$ and $D_n$ which will feature in a number of our proofs.
First, $W(A_n)$ may be viewed as being $\sym(n+1)$. The elements of
$W(B_n)$ can be thought of as signed permutations of $\sym(n)$. For
example, the element $w = (\+ 1 \mi 2)$ of $W(B_n)$ is given by
$w(1) = 2$, $w(-1) = -2$, $w(2) = -1$ and $w(-2) = 1$.  We say a
cycle in an element of $W(B_n)$ is of negative sign type if it has
an odd number of minus signs, and positive sign type otherwise. An
element $w$ expressed as a product $g_1g_2\cdots g_k$ of disjoint
signed cycles is {\em positive} if the product of all the sign types
of the cycles is positive, and negative otherwise. The group
$W(D_n)$ consists of all positive elements of $W(B_n)$, while the
group $W(A_{n-1})$ consists of all elements of $W(B_n)$ whose cycles
contain only plus signs. Even if $w$ is positive, it may contain
negative cycles, which we wish on occasion to consider separately,
so when considering elements of $W(D_n)$ we often work in the
environment of $W(B_n)$ to avoid ending up with non-group elements.\\

Let $V$ be an $n$-dimensional real vector space with orthonormal
basis $\{e_1, \ldots, e_n\}$. Then we may take the root system $B_n$
to have positive roots of the form $e_j \pm e_i$ for $1 \leq i < j
\leq n$ and $e_i$ for $1\leq i \leq n$. The positive roots of $D_n$
are of the form $e_j \pm e_i$ for $1 \leq i < j \leq n$. The
positive roots of $A_{n-1}$ are of the form $e_j - e_i$ for $1 \leq
i < j \leq n$. Therefore the root system $D_n$ consists of the long
roots of the root system $B_n$, and the root system $A_{n-1}$
is contained in the set of long roots of $B_n$.  \\

From the fact that for an element $w$ of a Coxeter group $W$ we have
$\ell(w) = |N(w)|$, we deduce the following.

\begin{thm}\label{ls} Let $n$ be a positive integer greater than 1, and
let $W$ be of type $A_{n-1}, B_n$ or $D_n$. Set \begin{eqnarray*}
\Lambda &=& \{e_j - e_i, e_j + e_i: 1 \leq i < j \leq n\}{\mbox{ and}}\\
\Sigma &=& \{e_i: 1 \leq i \leq n\}.\end{eqnarray*}%
For $w \in W$, set
\begin{eqnarray*}\Lambda(w) &=& \{\alpha \in \Lambda: w\cdot \alpha < 0\} \;\; {\mbox{ and}}\\
\Sigma(w) &=& \{\alpha \in \Sigma: w \cdot \alpha < 0\}.\end{eqnarray*}%
Then $\ell_{B_n}(w) = |\Lambda(w)| + |\Sigma(w)|$. If $w$ is an
element of $W(D_n)$, then $\ell_{D_n}(w) = |\Lambda(w)|$, and if $w$
is an element of $W(A_{n-1})$, then $\ell_{A_{n-1}}(w) =
|\Lambda(w)|$.\end{thm}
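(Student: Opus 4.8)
The plan is to derive everything directly from the fact, recalled above, that $\ell_W(w) = |N(w)|$ for any element $w$ of a Coxeter group $W$, by substituting the explicit positive root systems listed just before the statement. The three cases differ only in how $\Phi^+$ decomposes relative to $\Lambda$ and $\Sigma$.

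First, for $W = W(B_n)$ we have $\Phi^+ = \Lambda \cup \Sigma$, and this is a disjoint union, since no vector of the form $e_j \pm e_i$ with $i < j$ equals a vector $e_k$. Hence $N(w) = \{\alpha \in \Phi^+ : w\cdot\alpha < 0\}$ is the disjoint union of $\Lambda(w)$ and $\Sigma(w)$, giving $\ell_{B_n}(w) = |N(w)| = |\Lambda(w)| + |\Sigma(w)|$.

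Next, for $W = W(D_n)$ the positive roots are precisely the elements of $\Lambda$, so $N(w) = \Lambda(w)$ and $\ell_{D_n}(w) = |\Lambda(w)|$ at once. The point to check here is merely that $\Lambda(w)$, defined via the action of $w$ on $V$, is the same set one would compute intrinsically inside the $D_n$ root system; this holds because the splitting of $\Phi(D_n)$ into positive and negative roots is the restriction of the splitting of $\Phi(B_n)$, and $w \in W(D_n) \le W(B_n)$ maps $\Lambda$ into $\pm\Lambda$.

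Finally, for $W = W(A_{n-1})$ we have $\Phi^+ = \{e_j - e_i : 1 \le i < j \le n\} \subsetneq \Lambda$, so a priori only $N(w) \subseteq \Lambda(w)$. The one genuinely substantive step is to see that these sets are equal. Viewing $W(A_{n-1})$ inside $W(B_n)$ as the signed permutations with only plus signs, such an element $w$ simply permutes $\{e_1,\dots,e_n\}$, so $w\cdot(e_j + e_i) = e_{w(j)} + e_{w(i)}$, which is again a positive root of $B_n$ for any distinct $i,j$. Thus no root $e_j + e_i$ can lie in $\Lambda(w)$, whence $\Lambda(w) = N(w)$ and $\ell_{A_{n-1}}(w) = |\Lambda(w)|$. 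I expect this last observation, together with keeping straight that \emph{positive} and \emph{negative} in the definitions of $\Lambda(w)$ and $\Sigma(w)$ refer to the ambient $B_n$ system so that they restrict correctly to the $D_n$ and $A_{n-1}$ subsystems, to be the only places requiring any care; the rest is an immediate substitution into $\ell(w) = |N(w)|$.
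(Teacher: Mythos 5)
Your proof is correct and follows essentially the same route as the paper: decompose $\Phi^+$ into $\Lambda$ and $\Sigma$ (long and short positive roots), apply $\ell(w)=|N(w)|$, and for type $A_{n-1}$ observe that $w\cdot(e_j+e_i)=e_{w(j)}+e_{w(i)}>0$ so the extra roots of $\Lambda$ never contribute. The paper's proof is just a terser version of exactly this argument.
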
 %
\begin{proof} Note that $\Lambda(w)$ is the set of long positive
roots taken negative by $w$, and $\Sigma(w)$ is the set of short
positive roots taken negative by $w$. The results for
$\ell_{B_n}(w)$ and $\ell_{D_n}(w)$ are now immediate. For the case
where $w \in W(A_{n-1})$, note that $w\cdot(e_j + e_i) = e_{w(j)} +
e_{w(i)} > 0$ for all $i, j$, so $|\Lambda(w)| = |\{e_j - e_i: 1\leq
i < j < n, w\cdot (e_j - e_i) < 0\}| = \ell_{A_{n-1}}(w)$. So the
result
follows.\end{proof}\\

In the proof of the next lemma we will require the elementary
observation that \linebreak $-h\cdot \Lambda(h) = \Lambda(h^{-1})$,
and hence $|\Lambda(h^{-1})| = |\Lambda(h)|$.

\begin{lemma} \label{ngh} Let $W$ be of type $A_{n-1}$, $B_n$ or $D_n$. Let $g, h \in
W$. Then $$|\Lambda(gh)| = |\Lambda(g)| + |\Lambda(h)| -
2|\Lambda(g) \cap \Lambda(h^{-1})|.$$
\end{lemma}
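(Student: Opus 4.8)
The plan is to reduce the identity about $\Lambda(gh)$ to a counting statement about a single set of long positive roots, and then to partition that set according to the signs of the relevant roots. The starting point is the description $\Lambda(w) = \{\alpha \in \Lambda : w\cdot\alpha < 0\}$ together with the observation, recorded just before the statement, that $-h\cdot\Lambda(h) = \Lambda(h^{-1})$ and hence $|\Lambda(h^{-1})| = |\Lambda(h)|$. Since $gh\cdot\alpha < 0$ exactly when $h\cdot\alpha$ lies in the set of long roots sent negative by $g$, the natural move is to look at each $\alpha \in \Lambda$ and track the pair of signs $(\mathrm{sgn}(h\cdot\alpha), \mathrm{sgn}(gh\cdot\alpha))$. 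Because $h$ permutes $\pm\Lambda$ (it fixes the set of long roots setwise, as $W$ acts on its root system), applying $h$ sets up a bijection between $\Lambda$ and itself up to sign, and that is what converts statements about $h\cdot\alpha$ into statements about membership in $\Lambda(g)$ or its complement.

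First I would fix $w = gh$ and partition $\Lambda$ into four classes according to whether $\alpha \in \Lambda(h)$ or not, and whether $\alpha \in \Lambda(w)$ or not; write $a,b,c,d$ for the sizes of the four blocks, so that $|\Lambda(h)| = a + b$ (say $\alpha \in \Lambda(h)$) and $|\Lambda(gh)| = a + c$ with appropriate labelling. Next I would apply $h$ to each root: if $\alpha \in \Lambda(h)$ then $\beta := -h\cdot\alpha \in \Lambda(h^{-1}) \subseteq \Lambda$, and if $\alpha \notin \Lambda(h)$ then $\beta := h\cdot\alpha \in \Lambda \setminus \Lambda(h^{-1})$. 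Under this correspondence I would check that $\alpha \in \Lambda(gh)$ translates, in the first case, into $\beta \notin \Lambda(g)$ (a sign flip occurs because $\beta = -h\cdot\alpha$), and in the second case into $\beta \in \Lambda(g)$. Carrying this through, the block of $\alpha$ with $\alpha\in\Lambda(h)$ and $\alpha\in\Lambda(gh)$ matches bijectively the elements $\beta\in\Lambda(h^{-1})$ with $\beta\notin\Lambda(g)$, and so on for the other three blocks; summing the four resulting equalities and using $|\Lambda(h^{-1})| = |\Lambda(h)|$ yields the claimed formula after elementary manipulation of $a,b,c,d$.

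Concretely, once the bookkeeping is set up one finds $|\Lambda(gh)| = |\Lambda(h)| - |\Lambda(g)\cap\Lambda(h^{-1})| + \bigl(|\Lambda(g)| - |\Lambda(g)\cap\Lambda(h^{-1})|\bigr)$, where the first two terms count the roots of $\Lambda(h)$ that stay negative under the further action of $g$ (those $\beta \in \Lambda(h^{-1})$ with $\beta \notin \Lambda(g)$), and the last bracket counts the roots outside $\Lambda(h)$ that become negative (those $\beta \notin \Lambda(h^{-1})$ with $\beta \in \Lambda(g)$). This is exactly $|\Lambda(g)| + |\Lambda(h)| - 2|\Lambda(g)\cap\Lambda(h^{-1})|$.

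The step I expect to be the main obstacle is the sign-tracking in the case $\alpha \in \Lambda(h)$: here $h\cdot\alpha$ is a negative long root, so one must pass to $\beta = -h\cdot\alpha$ before asking whether $g$ sends it negative, and the condition $gh\cdot\alpha < 0$ becomes $g\cdot(-\beta) < 0$, i.e. $g\cdot\beta > 0$, i.e. $\beta \notin \Lambda(g)$ — the extra negation is easy to get backwards. The only other point needing a word of care is that $h$ genuinely permutes the long roots $\pm\Lambda$ setwise (so that $\beta$ really does land in $\Lambda$, up to sign, and the four maps really are bijections); for type $A_{n-1}$ this is the content of the computation in the proof of Theorem~\ref{ls}, and for types $B_n$ and $D_n$ it is immediate since $W$ preserves the sets of long and short roots. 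Everything else is routine four-block inclusion–exclusion.
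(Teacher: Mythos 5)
Your proof is correct and follows essentially the same route as the paper's: both decompose $\Lambda(gh)$ according to whether $\alpha\in\Lambda(h)$, push each piece through $h$ (with the sign flip $\beta=-h\cdot\alpha$ on the $\Lambda(h)$ part) to identify it with $\Lambda(h^{-1})\setminus\Lambda(g)$ or $\Lambda(g)\setminus\Lambda(h^{-1})$, and finish with $|\Lambda(h^{-1})|=|\Lambda(h)|$. Your four-block bookkeeping is only a cosmetic elaboration of the paper's two-set disjoint union, and your sign-tracking in the delicate case is exactly right.
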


\begin{proof} Suppose $\alpha \in \Lambda(gh)$. Then either
$h\cdot\alpha \in \Lambda(g)$ or $\alpha \in \Lambda(h)$ and
$h\cdot\alpha \notin -\Lambda(g)$. That is, $\Lambda(gh)$ is the
disjoint union of $h^{-1}\cdot(\Lambda(g)\setminus \Lambda(h^{-1}))$
and $\Lambda(h)\setminus(h^{-1}\cdot (-\Lambda(g)))$. But
$$|h^{-1}\cdot(\Lambda(g)\setminus \Lambda(h^{-1}))| = |\Lambda(g)| -
|\Lambda(g) \cap \Lambda(h^{-1})|$$ and
\begin{align*} |\Lambda(h)\setminus(h^{-1}\cdot (-\Lambda(g)))| &= |(-h\cdot
\Lambda(h)) \setminus \Lambda(g)| \\ &= |\Lambda(h^{-1})\setminus
\Lambda(g)| \\ &= |\Lambda(h^{-1})| - |\Lambda(h^{-1})\cap
\Lambda(g)| \\ &=
|\Lambda(h)| - |\Lambda(g) \cap \Lambda(h^{-1})|.\end{align*} %
Therefore $|\Lambda(gh)| = |\Lambda(g)| + |\Lambda(h)| -
2|\Lambda(g) \cap \Lambda(h^{-1})|$, as required.\end{proof}\\

Since this paper is concerned with involutions, let us say a few
words about conjugacy classes of involutions. It is well known that
involutions in $W(A_{n-1}) \cong \sym(n)$ are parameterised by cycle
type. That is, involutions are conjugate if and only if they have
the same number of transpositions $m$. Therefore Theorem \ref{sn} is
giving the length polynomials for individual conjugacy classes of
involutions. For type $B_n$, the situation is similar. It is clear
that an element $w$ of $W(B_n)$ is an involution precisely when each
of its signed cycles is either a positive transposition, a negative
1-cycle or a positive 1-cycle. It can be shown that conjugacy
classes are parameterised by signed cycle type, so that involutions
are conjugate if and only if they have the same number, $m$, of
transpositions (all of which must be positive), and the same number,
$e$, of negative 1-cycles. So again, Theorem \ref{bn} gives the
length polynomials for individual conjugacy classes of involutions.
The case for type $D_n$ is slightly more involved. Involutions again
consist of 1-cycles and positive transpositions, and since we are in
type $D_n$ there must be an even number of negative 1-cycles. There
is exactly one conjugacy class of involutions in $W(D_n)$ with $m$
transpositions and $e$ negative 1-cycles (with $e$ even), {\em
except} when $n = 2m$. In that case, there are two conjugacy
classes. However, there is a length preserving automorphism of the
Coxeter graph which interchanges these classes. Therefore if $X$ is
either class in $W(D_n)$, we see that $L_{2m, m, 0}(t) =
2L_{2m,X}(t)$, and so the information for any conjugacy class of
involutions can be retrieved from Theorem \ref{dn}. This means that
Theorems \ref{sn}, \ref{bn} and \ref{dn} give full information about
the length polynomials of conjugacy classes of involutions in the
classical Weyl groups. This is the motivation for finding, in
Section 4, corresponding information for conjugacy classes of
involutions in
the remaining finite irreducible Coxeter groups.\\

Here is a rough outline of the method we will follow in the next two
sections. Let $W_n$ be of type $A_{n-1}, B_n$ or $D_n$, where we are
viewing types $A_{n-1}$ and $D_n$ as subgroups of $W(B_n)$ as
described above. For an involution $x$ of $W$, we will define $\tau$
to be the cycle containing $n$ in the expression for $x$ as a
product of disjoint cycles. Since $x$ is an involution, $\tau$ will
be either a 1-cycle or a transposition. Then set $y = \tau x$. By
calculating the length of $y$ in terms of the lengths of $\tau$ and
$x$, we may hope to obtain a recursive formula for our length
polynomial. However if $\tau$ is a transposition, we cannot retrieve
$\ell(x)$ from $\ell(y)$ without knowing $y$ itself, and thus we
must perform another step. That step is to `compress' $y$ by
conjugating by $(\+ n\; \+{n-1}\; \cdots \+ r)$. This results in an
involution $z$ whose length depends only on $n$, $r$ and $\ell(x)$.

\section{Results for Classical Weyl groups}

Throughout this section, $W$ will be a Coxeter group of type
$A_{n-1}$, $B_n$ or $D_n$.

\begin{thm}\label{tech1}
Let $x$ be an involution in $W$, $\tau$ be the cycle containing $n$
in the expression for $x$ as a product of disjoint cycles, and set
$y = x\tau$. Then either $\tau = (\+ n)$, $\tau = (\mi n)$ or there
is some $r$ with $1 \leq r < n$ for which $\tau$ equals $(\+ r \+
n)$ or $(\mi r \mi n)$. Moreover, writing $$\Delta_r(y) = |\{k \in
\{1, \ldots, n\}: |y(k)| < r < k\}| + |\{k \in \{1, \ldots, n\}:
y(k) < 0, r < k, r < |y(k)|\}|,$$ we have %
\begin{align*} |\Sigma(x)| &= \left\{\begin{array}{ll} |\Sigma(y)| &
{\mbox{ if $\tau = (\+ n)$}}\\
|\Sigma(y)| + 1 &
{\mbox{ if $\tau = (\mi n)$}}\\
|\Sigma(y)|  &
{\mbox{ if $\tau = (\+ r\+n)$}}\\
|\Sigma(y)| + 2 & {\mbox{ if $\tau = (\mi r \mi
n)$}}\end{array}\right.\\
|\Lambda(x)| &= \left\{\begin{array}{ll} |\Lambda(y)| &
{\mbox{ if $\tau = (\+ n)$}}\\
2(n-1) + |\Lambda(y)| &
{\mbox{ if $\tau = (\mi n)$}}\\
2(n-r) - 1 + |\Lambda(y)| - 2\Delta_r(y) &
{\mbox{ if $\tau = (\+ r\+n)$}}\\
2(n+r) - 5 + |\Lambda(y)| - 2\Delta_r(y) & {\mbox{ if $\tau = (\mi r
\mi n)$}}\end{array}\right.\end{align*}\end{thm}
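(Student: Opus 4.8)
The plan is to analyse how multiplying $x$ by the cycle $\tau$ containing $n$ changes the sets $\Sigma(\cdot)$ and $\Lambda(\cdot)$, treating each of the four possible forms of $\tau$ in turn. First I would verify the structural claim that $\tau$ is one of $(\+n)$, $(\mi n)$, $(\+r\,\+n)$ or $(\mi r\,\mi n)$: since $x$ is an involution in $W(B_n)$ (with $W(A_{n-1})$, $W(D_n)$ viewed inside it), every cycle of $x$ is a positive $1$-cycle, a negative $1$-cycle, or a positive transposition, and the only ways $n$ can sit in such a cycle give exactly this list — with the $(\mi r\,\mi n)$ case arising because a positive transposition $(\+r\,\+n)$ and a "doubly negative" transposition $(\mi r\,\mi n)$ are both positive transpositions in the signed-cycle sense (the product of sign types is positive). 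Note $y=x\tau$ then has the cycle at $n$ removed (or $n$ fixed), so $y$ fixes $n$ and, in the transposition cases, also fixes $r$; this is what lets the recursion drop to rank $n-1$ or $n-2$.

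For the $\Sigma$ computation: $\Sigma(x)=\{e_i : x\cdot e_i<0\}=\{e_i : x(i)<0\}$, so I just need to compare which short roots are made negative by $x$ versus by $y=x\tau$. Since $\tau$ only moves (at most) the coordinates $r$ and $n$, the sets $\Sigma(x)$ and $\Sigma(y)$ can differ only in the entries $e_r$ and $e_n$. Checking the four cases: $(\+n)$ changes nothing at coordinate $n$ (both fix it with a plus sign) so $|\Sigma(x)|=|\Sigma(y)|$; $(\mi n)$ contributes $e_n$ to $\Sigma(x)$ but not to $\Sigma(y)$, giving $+1$; $(\+r\,\+n)$ sends $r\mapsto n\mapsto r$ with plus signs, so neither $e_r$ nor $e_n$ is in $\Sigma(x)$ or $\Sigma(y)$, giving $+0$; $(\mi r\,\mi n)$ sends $r\mapsto -n$, $n\mapsto -r$, so both $e_r,e_n\in\Sigma(x)$ while neither is in $\Sigma(y)$, giving $+2$. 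These are routine case checks once the observation about which coordinates can change is in place.

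The substantive part is the $\Lambda$ computation, and here I would use Lemma \ref{ngh}: $|\Lambda(x)|=|\Lambda(y\tau^{-1})|=|\Lambda(y)|+|\Lambda(\tau^{-1})|-2|\Lambda(y)\cap\Lambda(\tau)|$, using $\tau^{-1}=\tau$ and $|\Lambda(\tau^{-1})|=|\Lambda(\tau)|$. The three ingredients to pin down are: (i) $|\Lambda(\tau)|$, the number of long positive roots inverted by $\tau$ alone — this is a direct count giving $0$ for $(\+n)$, $2(n-1)$ for $(\mi n)$, $2(n-r)-1$ for $(\+r\,\+n)$, and $2(n+r)-5$ for $(\mi r\,\mi n)$ (one counts roots $e_n\pm e_i$, $e_r\pm e_i$ and $e_n- e_r$ explicitly, being careful about double-counting $e_n- e_r$ and about signs); (ii) the claim that $\Lambda(y)\cap\Lambda(\tau)=\emptyset$ in the two $1$-cycle cases, which holds because $y$ fixes $n$ so inverts no long root involving $e_n$, whereas every root in $\Lambda(\tau)$ involves $e_n$; and (iii) in the two transposition cases, the identification $|\Lambda(y)\cap\Lambda(\tau)|=\Delta_r(y)$. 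Step (iii) is the main obstacle: I expect to describe $\Lambda(\tau)$ explicitly for $\tau=(\+r\,\+n)$ (resp. $(\mi r\,\mi n)$) as the set of long roots involving coordinate $n$ or $r$ with the relevant signs, then intersect with $\Lambda(y)=\{$long $\alpha>0: y\cdot\alpha<0\}$, and show that a root $\alpha=e_k\pm e_r$ or $e_k\pm e_n$ (with $k$ ranging, and also the root $e_n\pm e_r$) lies in the intersection precisely when $k$ satisfies $|y(k)|<r<k$, or $y(k)<0$ with $r<k$ and $r<|y(k)|$ — matching the two summands in the definition of $\Delta_r(y)$. This requires a careful bookkeeping of which of $y(k)$ versus $\pm r$ (and $\pm n$) is larger in absolute value and of the signs, using that $y$ fixes both $r$ and $n$; once that correspondence is established the four formulas follow by substituting (i), (ii)/(iii) into the Lemma \ref{ngh} expression.
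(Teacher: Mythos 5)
Your proposal is correct and follows essentially the same route as the paper: the $\Sigma$ count is a direct check of signs at the coordinates moved by $\tau$, and the $\Lambda$ formulas come from Lemma \ref{ngh} applied to $x=y\tau$, combined with the explicit counts $|\Lambda(\tau)|=0$, $2(n-1)$, $2(n-r)-1$, $2(n+r)-5$, the observation that $\Lambda(y)\cap\Lambda(\tau)=\emptyset$ in the $1$-cycle cases since $y$ fixes $n$, and the identification $|\Lambda(y)\cap\Lambda(\tau)|=\Delta_r(y)$ in the transposition cases. The bookkeeping you defer in step (iii) is exactly the case analysis the paper carries out, and your outline of it is accurate.
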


\begin{proof} Since for any $w \in W$,  $\Sigma(w)$ is just the number of minus signs
in the expression for $w$ as a product of disjoint signed cycles,
the result for $\Sigma(x)$ is clear. Therefore we will now
concentrate on finding $\Lambda(x)$ in terms of $\Lambda(y)$. By
Lemma \ref{ngh} we observe that

\begin{equation}\label{eq1}|\Lambda(x)| = |\Lambda(\tau)| + |\Lambda(y)| - 2|\Lambda(y)
\cap \Lambda(\tau)|.\end{equation}

Now $y(n) = n$ and, if $\tau$ is not $(\+ n)$ or $(\mi n)$, then
$y(r) = r$. Therefore for $1 \leq i < n$ we see that $y\cdot (e_n
\pm e_i)$ is either $e_n + e_j$ or $e_n - e_j$ for
some $j < n$. Thus $e_n \pm e_i \notin \Lambda(y)$. We now work through the possibilities for $\tau$.\\

If $\tau = (\+ n)$, then $x = y$ and there is nothing to prove.\\

Suppose that $\tau = (\mi n)$. Then $$\Lambda(\tau) = \{e_n \pm e_i:
1 \leq i < n\}.$$ Thus $|\Lambda(\tau)| = 2(n-1)$ and $\Lambda(\tau)
\cap \Lambda(y) = \emptyset$. Substituting these values into
Equation \ref{eq1} gives $$|\Lambda(x)| = 2(n-1) + |\Lambda(y)|.$$

We move on to the case when $\tau = (\+ r \+ n)$. Then
$$\Lambda(\tau) = \{e_j - e_r: r < j < n\} \cup \{e_n - e_i: i \leq
r < n\}.$$ Thus $|\Lambda(\tau)| = 2(n-r) - 1$. We have already
noted that $e_n - e_i \notin \Lambda(y)$. Now $y\cdot(e_j - e_r)$ is
$e_{|y(j)|} - e_r$ if $y(j)
> 0$ and $-e_{|y(j)|} - e_r$ if $y(j) < 0$. If $|y(j)| < r < j$,
then $y\cdot (e_j - e_r) = \pm e_{|y(j)|} - e_r < 0$ and so $e_j -
e_r \in \Lambda(y)$. If $r <|y(j)|$, then $e_j - e_r \in \Lambda(y)$
if and only if $r < j$ and $y(j) < 0$. Hence $$|\Lambda(y) \cap
\Lambda(\tau)| = |\{j : |y(j)|
< r < j\}| + |\{j: r<j, r<|y(j)|, y(j) < 0\}| = \Delta_r(y).$$ %
Substitution into Equation 1 now produces the expression
$$|\Lambda(x)| = 2(n-r) - 1 + |\Lambda(y)| - 2\Delta_r(y).$$
Finally we set $\tau = (\mi r \mi n)$. Then
\begin{align*}\Lambda(\tau) &= \{e_n \pm e_i: 1\leq i < r\} \cup \{e_n
- e_i: r < i < n\} \\ & \hspace*{1cm} \cup \{e_r \pm e_i: 1 \leq i <
r\} \cup \{e_j + e_r: r < j \leq n\}.\end{align*} Hence
$|\Lambda(\tau)| = 2(n+r) - 5$, and for $\Lambda(\tau) \cap
\Lambda(y)$ we need only consider roots of the form $e_r \pm e_i$,
for $i < r$, and roots $e_j + e_r$ for $r < j$. For $e_r \pm e_i$,
we have $y \cdot \{e_r + e_i, e_r - e_i\} = \{e_r + e_{|y(i)|}, e_r
- e_{|y(i)|}\}$. Now $e_r + e_{|y(i)|}$ is certainly positive, and
$e_r - e_{|y(i)|}$ is negative precisely when $r < |y(i)|$. Hence
$$|\Lambda(y) \cap \{e_r \pm e_i: 1 \leq i < r\}| = |\{i: i < r <
|y(i)|\}|.$$ Moreover, we see that $e_j + e_r \in \Lambda(y)$
precisely when $r < j$ (to ensure $e_j + e_r$ is positive), $y(j) <
0$ and $r < |y(j)|$. Therefore $$|\Lambda(y) \cap \Lambda(\tau)| =
|\{i: i < r < |y(i)|\}| + |\{j: r < j, y(j) < 0, r < |y(j)|\}| =
\Delta_r(y).$$ A final substitution into Equation 1 gives
$$|\Lambda(x) = 2(n+r) - 5 + |\Lambda(y)| - 2\Delta_r(y),$$ and this completes the
proof of Theorem \ref{tech1}.
\end{proof}\\

The next result uses $\Delta_r(y)$ again. The definition is the same
as that given in Theorem \ref{tech1}, but is included in the
statement of Theorem \ref{tech2} for ease of reference. We need one
more definition.

\begin{defn} For $r < n$, we define $c_r$ to be the cycle $(\+ n \;\; \+{n-1} \;
\cdots \; \+ r)$.\end{defn}

In Theorem \ref{tech2} note that by $y^{c_r}$ we mean
$c_ryc_r^{-1}$.

\begin{thm}\label{tech2} Let $y$ be an involution in $W$ with the property that
$y(r) = r$ for some $r < n$. Then $|\Sigma(y^{c_r})| = |\Sigma(y)|$.
Moreover, writing $$\Delta_r(y) = |\{k \in \{1, \ldots, n\}: |y(k)|
< r < k\}| + |\{k \in \{1, \ldots, n\}: y(k) < 0, r < k, r <
|y(k)|\}|,$$ we have
$$|\Lambda(y^{c_r})| = |\Lambda(y)| - 2\Delta_r(y).$$\end{thm}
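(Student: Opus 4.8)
The plan is to avoid a root-by-root analysis like that in Theorem~\ref{tech1} and instead apply Lemma~\ref{ngh} twice, writing the conjugate $y^{c_r}=c_ryc_r^{-1}$ first as $c_ry$ and then as $(c_ry)c_r^{-1}$. First I would dispose of the $\Sigma$ statement, which is immediate: the cycle $c_r=(\+n\;\+{n-1}\;\cdots\;\+r)$ carries only plus signs, so it is an unsigned permutation, whence $y^{c_r}(k)<0$ if and only if $y(c_r^{-1}(k))<0$; thus $\{k:y^{c_r}(k)<0\}=c_r\{m:y(m)<0\}$, and since $|\Sigma(w)|$ is simply the number of negative entries of $w$, we get $|\Sigma(y^{c_r})|=|\Sigma(y)|$.

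For the $\Lambda$-statement I would first record that $c_r$ is a signed permutation all of whose signs are positive, so it lies in $W(A_{n-1})$, and a fortiori in $W(B_n)$ and $W(D_n)$; that it sends $e_j+e_i$ to $e_{c_r(j)}+e_{c_r(i)}>0$ and so contributes no roots of that shape; and that $c_r(r)=n$ is the largest value while $c_r$ is order-preserving on $\{1,\dots,n\}\setminus\{r\}$. Hence $\Lambda(c_r)=\{e_j-e_r:r<j\le n\}$, so $|\Lambda(c_r)|=n-r$ and, by the observation preceding Lemma~\ref{ngh}, $|\Lambda(c_r^{-1})|=n-r$ as well.

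Next come the two intersection terms. Because $y$ is an involution fixing $r$, for $r<j\le n$ we have $y\cdot(e_j-e_r)=e_{y(j)}-e_r$ (with the convention $e_{-k}=-e_k$), and a short check on the sign and magnitude of $y(j)$ shows this root is negative precisely when $y(j)<0$ or $0<y(j)<r$; sorting such $j$ by whether $|y(j)|<r$ reproduces exactly the two summands of $\Delta_r(y)$, so $|\Lambda(c_r)\cap\Lambda(y)|=\Delta_r(y)$. Using $y^{-1}=y$, Lemma~\ref{ngh} then gives $|\Lambda(c_ry)|=(n-r)+|\Lambda(y)|-2\Delta_r(y)$. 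Second, I would show $\Lambda(c_r)\subseteq\Lambda(c_ry)$: for $r<j\le n$, $c_ry\cdot(e_j-e_r)=c_r\cdot(e_{y(j)}-e_r)=e_{c_r(y(j))}-e_n$, and since $c_r(y(j))=n$ would force $y(j)=r$, hence $j=r$, the entry $c_r(y(j))$ is negative or has absolute value at most $n-1$, so this root is negative. Thus $|\Lambda(c_ry)\cap\Lambda(c_r)|=n-r$, and a second application of Lemma~\ref{ngh}, now to $(c_ry)c_r^{-1}$, yields
$$|\Lambda(y^{c_r})|=\big((n-r)+|\Lambda(y)|-2\Delta_r(y)\big)+(n-r)-2(n-r)=|\Lambda(y)|-2\Delta_r(y),$$
as desired.

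The routine-but-delicate part is the sign bookkeeping that identifies $|\Lambda(c_r)\cap\Lambda(y)|$ with $\Delta_r(y)$ and establishes the inclusion $\Lambda(c_r)\subseteq\Lambda(c_ry)$; I do not expect a genuine obstacle here, but one must take care that $c_r$, $y$ and $c_ry$ all lie in $W$ in each type so that Lemma~\ref{ngh} applies, and that the two uses of that lemma invoke $y^{-1}=y$ and $(c_r^{-1})^{-1}=c_r$ respectively in the intersection terms.
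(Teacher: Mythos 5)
Your proof is correct, but it takes a genuinely different route from the paper's. The paper argues directly on the conjugate: it splits $\Lambda(y)$ into the roots $U_r(y)$ not involving the index $r$ and the roots $V_r(y)$ of the form $e_j\pm e_r$ or $e_r\pm e_i$, proves $\Lambda(y^{c_r})=c_r\cdot U_r(y)$ using the order-preservation of $c_r$ off $r$ together with the fact that $y^{c_r}$ fixes $n$, and then evaluates $|V_r(y)|=2\Delta_r(y)$ by an eight-case table. You instead factor the conjugation as two multiplications and invoke Lemma~\ref{ngh} twice, which reduces everything to three facts: $|\Lambda(c_r)|=n-r$, $|\Lambda(c_r)\cap\Lambda(y)|=\Delta_r(y)$, and $\Lambda(c_r)\subseteq\Lambda(c_ry)$. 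All three check out (in particular your identification of $\{j>r: y(j)<0 \mbox{ or } 0<y(j)<r\}$ with the two summands of $\Delta_r(y)$, split according to $|y(j)|<r$ or $|y(j)|>r$, agrees with the paper's first case table, and $|y(j)|=r$ is excluded because $y$ fixes $r$). Your route is arguably leaner: it reuses Lemma~\ref{ngh}, which the paper otherwise deploys only in Theorem~\ref{tech1}; it confines the sign bookkeeping to the single family $e_j-e_r$, $j>r$; and the factor $2$ in $2\Delta_r(y)$ falls out of the lemma rather than from pairing $e_j-e_r$ with $e_j+e_r$ and $e_r\pm e_i$ with $e_r\pm e_{|y(i)|}$. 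What the paper's argument buys in exchange is an explicit structural description of the new negative set, $\Lambda(y^{c_r})=c_r\cdot U_r(y)$, rather than just its cardinality. Your cautionary remark about membership in $W$ is well taken but harmless: $c_r$ has all plus signs, hence lies in $W(A_{n-1})\subseteq W(D_n)\subseteq W(B_n)$, so $c_r$, $c_ry$ and $y^{c_r}$ stay inside $W$ in each of the three types.
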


\begin{proof}
Let $y$ be an involution in $W$ with $y(r) = r$. We will write $c$
instead of $c_r$ for ease of notation. Since $c$ contains no minus
signs, clearly $|\Sigma(y)| = |\Sigma(y^{c})|$. To derive the result
for $\Lambda(y)$, we will consider two subsets $V_r$ and $U_r$ of
$\Lambda(y)$, where
\begin{align*} V_r(y) &= \Lambda(y) \cap \{e_n \pm e_r, \ldots,
e_{r+1} \pm e_r, e_r \pm
e_{r-1}, \ldots, e_r \pm e_1\} \; {\mbox{ and}}\\
U_r(y) &= \Lambda(y) \setminus V_r.\end{align*}%
Note that $\Lambda(y)$ is the disjoint union of $U_r(y)$ and
$V_r(y)$. We claim that $\Lambda(y^{c}) = c\cdot U_r(y)$. Firstly,
consider $c\cdot U_r(y)$. A root $e_j \pm e_i$ is in $U_r(y)$ if and
only if $r \notin\{i,j\}$, $j > i$ and $y\cdot(e_j \pm e_i) < 0$.
Now $c$ is order preserving on $\{1, \ldots, r, r+1, \ldots, n\}$,
and $c^{-1}$ is order preserving on $\{1, \ldots, n-1\}$. Hence $e_j
\pm e_i \in U_r(y)$ if and only if $n \notin \{c(i), c(j)\}$, $c(j)
> c(i)$ and $y\cdot(e_j \pm e_i) < 0$. Now $y(r) = r$, so if $r \notin
\{i,j\}$, then $r \notin \{|y(i)|, |y(j)|\}$. Hence $e_j \pm e_i \in
U_r(y)$ if and only if $n \notin \{c(i), c(j)\}$, $c(j) > c(i)$ and
$y^{c}\cdot (e_{c(j)} \pm e_{c(i)}) = c y\cdot(e_j \pm e_i) < 0$.
That is, $e_j \pm e_i \in U_r(y)$ if and only if $n \notin \{c(i),
c(j)\}$ and $e_{c(j)} \pm e_{c(i)} \in \Lambda(y^{c})$. Observe
though that $y^{c}(n) = n$, and so $\Lambda(y^{c})$ contains no
elements of the form $e_n \pm e_i$. Therefore the restriction $n
\notin\{c(i), c(j)\}$ is redundant for elements of $\Lambda(y^{c})$.
Therefore $e_j \pm e_i \in U_r(y)$ if and only if $e_{c(j)} \pm
e_{c(i)} \in \Lambda(y^c)$. That is, $\Lambda(y^{c}) = c\cdot
U_r(y)$, as claimed.\\

We have shown so far that $|\Lambda(y)| = |\Lambda(y^c)| +
|V_r(y)|$. So it remains to find $|V_r(y)|$. Unfortunately there are
eight possibilities. For $r<j$, we must look at the positive roots
$e_j - e_r$ and $e_j + e_r$. For $i < r$, we must look at $e_r -
e_i$ and $e_r + e_r$. The following tables give the outcome in each
case. Firstly, we consider $j$ where $r<j$.

\begin{center} \begin{tabular}{ccrrc}
 $y(j)$ & $|y(j)|$ & $y\cdot (e_j - e_r)$ & $y\cdot (e_j + e_r)$ & \# Roots in $V_r(y)$\\
\hline
$<0$ & $<r$ & $-e_{|y(j)|} - e_r$ & $-e_{|y(j)|} + e_r$& 1\\
$<0$ & $>r$ & $-e_{|y(j)|} - e_r$ & $-e_{|y(j)|} + e_r$& 2\\
$>0$ & $<r$ & $e_{|y(j)|} - e_r$ & $e_{|y(j)|} + e_r$& 1\\
$>0$ & $>r$ & $e_{|y(j)|} - e_r$ & $e_{|y(j)|} + e_r$& 0
\end{tabular}\end{center}

Therefore the number of roots in $V_r(y)$ of the form $e_j \pm e_r$
for some $j > r$ is \begin{equation} \label{eq2} |\{j: |y(j)| < r <
j \}| + 2|\{j: r < j, y(j) < 0, r < |y(j)|\}|.\end{equation}

Now we consider $i < r$ in the following table.

\begin{center} \begin{tabular}{ccllc}
 $y(i)$ & $|y(i)|$ & $y\cdot (e_r - e_i)$ & $y\cdot (e_r + e_i)$ & \# Roots in $V_r(y)$\\
\hline
$<0$ & $<r$ & $e_r + e_{|y(i)|}$ & $e_r - e_{|y(i)|}$& 0\\
$<0$ & $>r$ & $e_r + e_{|y(i)|}$ & $e_r - e_{|y(i)|}$& 1\\
$>0$ & $<r$ & $e_r - e_{|y(i)|}$ & $e_r + e_{|y(i)|}$& 0\\
$>0$ & $>r$ & $e_r - e_{|y(i)|}$ & $e_r + e_{|y(i)|}$& 1
\end{tabular}\end{center}

Therefore the number of roots in $V_r(y)$ of the form $e_r \pm e_i$
for some $i < r$ is $|\{i: i < r < |y(i)|\}|$. Writing $k = |y(i)|$,
we note that $\{i: i < r < |y(i)|\} = \{k: |y(k)| < r < k\}$.
Therefore the number of roots in $V_r(y)$ of the form $e_r \pm e_i$
for some $i < r$ is \begin{equation}\label{eq3}|\{k: |y(k)| < r <
k\}|.\end{equation} Combining (\ref{eq2}) and (\ref{eq3}) we get
that $$|V_r(y)| = 2|\{k \in \{1, \ldots, n\}: |y(k)| < r < k\}| +
2|\{k \in \{1, \ldots, n\}: y(k) < 0, r < k, r < |y(k)|\}|,$$ which
is just $2\Delta_r(y)$. Recalling that $|\Lambda(y)| = |U_r(y)| +
|V_r(y)|$ gives $|\Lambda(y)| = |\Lambda(y^c)| + 2\Delta_r(y)$, and
the proof is complete.
\end{proof}

\begin{cor} \label{tech3} Let $x$ be an involution in $W$ such that $\tau = (\+r \+n)$ is
the cycle containing $n$ in the expression of $x$ as a product of
disjoint signed cycles. Let $z = (x\tau)^{c_r}= c_r(x\tau)c_r^{-1}$.
Then $|\Sigma(x)| = |\Sigma(z)|$ and $|\Lambda(x)| = |\Lambda(z)| +
2(n-r) - 1$.\end{cor}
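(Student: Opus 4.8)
The plan is to obtain this corollary as a formal consequence of Theorems \ref{tech1} and \ref{tech2}, using $y := x\tau$ as the intermediate element. First I would record the elementary facts about $y$ needed to invoke the earlier theorems. Since $\tau$ is one of the disjoint cycles in the cycle decomposition of $x$, it commutes with $x$, so $y = x\tau = \tau x$ is again an involution, and $y$ fixes every point in the support of $\tau$; in particular $y(r) = r$ (with $r < n$) and $y(n) = n$. These are exactly the hypotheses required to apply Theorem \ref{tech2} to $y$ at the index $r$.

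Next I would read off, from the $\tau = (\+ r \+ n)$ case of Theorem \ref{tech1},
$$|\Sigma(x)| = |\Sigma(y)| \qquad\text{and}\qquad |\Lambda(x)| = 2(n-r) - 1 + |\Lambda(y)| - 2\Delta_r(y),$$
and then, applying Theorem \ref{tech2} to the involution $y$ and recalling $z = y^{c_r}$,
$$|\Sigma(z)| = |\Sigma(y)| \qquad\text{and}\qquad |\Lambda(z)| = |\Lambda(y)| - 2\Delta_r(y).$$
Since the quantity $\Delta_r(y)$ occurring in Theorem \ref{tech1} and in Theorem \ref{tech2} is given by the identical formula, combining the two displays is immediate: $|\Sigma(x)| = |\Sigma(y)| = |\Sigma(z)|$, and $|\Lambda(x)| = 2(n-r)-1 + \bigl(|\Lambda(y)| - 2\Delta_r(y)\bigr) = 2(n-r)-1 + |\Lambda(z)|$, which is the assertion of the corollary.

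There is no genuinely difficult step here; the corollary is essentially a bookkeeping lemma that packages the two technical theorems. The only point deserving a moment's care is verifying that $y = x\tau$ really does satisfy the hypothesis of Theorem \ref{tech2} — namely that it is an involution fixing $r$ — which follows at once from the disjointness of the cycle decomposition of $x$. One should also note explicitly that $\Delta_r(y)$ means the same function of $y$ in both theorems, so that the cancellation in the final line is legitimate; this is guaranteed because both statements quote the same definition.
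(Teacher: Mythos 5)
Your proof is correct and matches the paper's own argument exactly: both introduce $y = x\tau$, apply the $(\+r\+n)$ case of Theorem \ref{tech1} and then Theorem \ref{tech2} to $y$, and cancel the common term $2\Delta_r(y)$. Your extra remarks verifying that $y$ is an involution fixing $r$ and that $\Delta_r(y)$ denotes the same quantity in both theorems are sound and only make explicit what the paper leaves implicit.
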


\begin{proof} Let $y = x\tau$. Then by Theorem \ref{tech1},
$|\Sigma(y)| = |\Sigma(x)|$ and $\Lambda(x) = 2(n-r)-1 +
|\Lambda(y)| - 2\Delta_r(y)$. By Theorem \ref{tech2}, $|\Sigma(z)| =
|\Sigma(y)|$ and $|\Lambda(z)| = |\Lambda(y)| - 2\Delta_r(y)$. The
result follows immediately.\end{proof}\\

An almost identical argument to the proof of Corollary \ref{tech3}
gives Corollary \ref{tech4}.

\begin{cor}\label{tech4} Let $x$ be an involution in $W$ such that $\tau = (\mi r \mi n)$ is
the cycle containing $n$ in the expression of $x$ as a product of
disjoint signed cycles. Let $z = (x\tau)^{c_r}= c_r(x\tau)c_r^{-1}$.
Then $|\Sigma(x)| = |\Sigma(z)| + 2$ and $|\Lambda(x)| =
|\Lambda(z)| + 2(n+r) - 5$.\end{cor}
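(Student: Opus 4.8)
The plan is to mimic the proof of Corollary~\ref{tech3} essentially verbatim, with the only changes being the appropriate substitution of the constant coming from the case $\tau = (\mi r \mi n)$ rather than $\tau = (\+ r \+ n)$ in Theorem~\ref{tech1}. Writing $y = x\tau$, I first observe that $\tau$ is indeed the cycle $(\mi r \mi n)$ containing $n$, and that $y$ is again an involution in $W$ with $y(r) = r$ (since $\tau$ fixes everything outside $\{\pm r, \pm n\}$ and $y$ restricted to $\{\pm r, \pm n\}$ is the identity, as $x$ and $\tau$ agree there). This last fact is exactly the hypothesis needed to invoke Theorem~\ref{tech2} with this value of $r$.

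Next I would combine the two ingredients. From Theorem~\ref{tech1}, in the case $\tau = (\mi r \mi n)$, we have $|\Sigma(x)| = |\Sigma(y)| + 2$ and $|\Lambda(x)| = 2(n+r) - 5 + |\Lambda(y)| - 2\Delta_r(y)$. From Theorem~\ref{tech2} applied to $y$ (using $y(r) = r$), with $z = y^{c_r} = c_r y c_r^{-1}$, we get $|\Sigma(z)| = |\Sigma(y)|$ and $|\Lambda(z)| = |\Lambda(y)| - 2\Delta_r(y)$. Substituting the second pair into the first pair eliminates both $|\Sigma(y)|$, $|\Lambda(y)|$ and the term $2\Delta_r(y)$, yielding $|\Sigma(x)| = |\Sigma(z)| + 2$ and $|\Lambda(x)| = |\Lambda(z)| + 2(n+r) - 5$, which is exactly the assertion of Corollary~\ref{tech4}.

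There is essentially no obstacle here: the work has all been done in Theorems~\ref{tech1} and~\ref{tech2}, and the corollary is just the act of chaining them. The one point that warrants a line of justification — and the only place the argument could go subtly wrong — is the claim that $y(r) = r$, since Theorem~\ref{tech2} has that as a standing hypothesis; but this is immediate from the fact that $\tau = (\mi r \mi n)$ sends $r \mapsto -n$, $n \mapsto -r$, and $x$ does likewise (as $\tau$ is precisely the cycle of $x$ containing $n$), so $y = x\tau$ fixes $r$ (and $n$). I would also note in passing, as the text already does, that $\Delta_r(y)$ in the statement of Corollary~\ref{tech4} is the same quantity appearing in both theorems, so no reconciliation of notation is needed. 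The proof is therefore three or four lines long and purely a matter of bookkeeping.
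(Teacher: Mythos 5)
Your proposal is correct and is exactly the argument the paper intends: the paper itself only remarks that "an almost identical argument to the proof of Corollary \ref{tech3} gives Corollary \ref{tech4}," and your chaining of the $\tau = (\mi r \mi n)$ case of Theorem \ref{tech1} with Theorem \ref{tech2} (after checking $y(r)=r$) is precisely that argument with the constants $2$ and $2(n+r)-5$ substituted.
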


\section{Proof of the Main Theorems}

\paragraph{Proof of Theorem \ref{sn}} Let $X_{A_{n-1},m,\ell}$ be the set
of involutions in $W(A_{n-1})\cong \sym(n)$ with $m$ transpositions
and length $\ell$. Let $X_{A_{n-1},m,\ell,r}$ be the set of
involutions $x$ in $W(A_{n-1})$ with $m$ transpositions and length
$\ell$  such that $x(n) = r$. Then $X_{A_{n-1}, m, \ell} =
\cup_{r=1}^n X_{A_{n-1}, m, \ell, r}$. Clearly $X_{A_{n-1}, m, \ell,
n} = X_{A_{n-2}, m, \ell}$. If $r < n$, then the cycle containing
$n$ is $(\+ r \+ n)$. Therefore, by Theorem \ref{ls} and Corollary
\ref{tech3}, the map $x \mapsto (x(\+ r\+ n))^{c_r}$ is a bijection
between $X_{A_{n-1},m,\ell,r}$ and $X_{A_{n-3},m-1,\ell-2(n-r) +
1}$. Hence
$$|X_{A_{n-1}, m, \ell}| = |X_{A_{n-2},m,\ell}| + \sum_{r=1}^{n-1}
|X_{A_{n-3},m-1,\ell-2(n-r)+1}|.$$ From this, setting $k=n-r$, we
immediately get $$\alpha_{n, m, \ell} = \alpha_{n-1, m, \ell} +
\sum_{k=1}^{n-1} \alpha_{n-2,m-1,\ell + 1 - 2k},$$ which is the
first part of Theorem \ref{sn}, and
\begin{align*} L_{n,m}(t) &= L_{n-1, m}(t) + \sum_{k=1}^{n-1}
t^{2k-1}L_{n-2,m-1}(t) \\
&= L_{n-1,m}(t) + (t + t^3 + \cdots + t^{2n-3})L_{n-2,m-1}(t)\\
&= L_{n-1,m}(t) +
\frac{t(t^{2(n-1)}-1)}{t^2-1}L_{n-2,m-1}(t).\end{align*} This gives
the second statement in Theorem \ref{sn}.\qed

We note that the statement relating to type $A_{n-1}$ in Corollary
\ref{all} is a simple consequence of Theorem \ref{sn} and the fact
that $L_{W(A_{n-1})}(t) = \sum_{m=1}^{\lfloor n/2 \rfloor}
L_{n,m}(t)$.

\paragraph{Proof of Theorem \ref{bn}} Let $X_{B_{n},m,e,\ell}$ be the set
of involutions of length $\ell$ in $B_n$ whose expression as a
product of disjoint signed cycles has $m$ transpositions and $e$
negative 1-cycles. Let $X_{B_{n},m,e,\ell,\rho}$ be the set of
involutions $x$ in $X_{B_n,m,e,\ell}$ such that $x(n) = \rho$. Then
$X_{B_{n}, m, \ell} = \bigcup_{r=1}^n \left(X_{B_{n}, m,e, \ell, r}
\cup X_{B_n,m,e,\ell,(-r)}\right)$. Clearly $X_{B_n, m, e, \ell, n}
= X_{B_{n-1}, m, e, \ell}$. If $\rho = -n$, then by Theorem \ref{ls}
and Theorem \ref{tech1}, the map $x \mapsto x(\mi n)$ is a bijection
between $X_{B_n,m,e,\ell,(-n)}$ and $X_{B_{n-1},m,e-1,\ell+1-2n}$.
If $\rho = r < n$, then by Theorem \ref{ls} and Corollary
\ref{tech3}, $x \mapsto (x(\+ r \+n))^{c_r}$ is a bijection between
$X_{B_n,m,e,\ell,r}$ and $X_{B_{n-2},m-1,e,\ell+1-2(n-r)}$. Finally
if $\rho = -r \neq -n$, then by Theorem \ref{ls} and Corollary
\ref{tech4}, $x \mapsto (x(\mi r \mi n))^{c_r}$ is a bijection
between $X_{B_n,m,e,\ell,r}$ and $X_{B_{n-2},m-1,e,\ell+3-2(n+r)}$.

Hence \begin{align*} |X_{B_n,m,e,\ell}| &= |X_{B_{n-1},m,e,\ell}| +
|X_{B_{n-1},m,e-1,\ell+1-2n}| \\ & \;\;\; +
\sum_{r=1}^{n-1}\left(|X_{B_{n-2},m-1,e,\ell+1-2(n-r)}| +
|X_{B_{n-2},m-1,e,\ell+3-2(n+r)}|\right)\\
&= |X_{B_{n-1},m,e,\ell}| + |X_{B_{n-1},m,e-1,\ell+1-2n}| +
\sum_{k=1}^{2n-2} |X_{B_{n-2},m-1,e,\ell + 1 - 2k}|.\end{align*}

From the definitions of $\beta_{n,m,e,\ell}$  and $L_{n,m,e}(t)$ we
now
get%
$$\beta_{n, m,e,\ell} = \beta_{n-1,m,e,\ell} + \beta_{n-1,m,e-1,\ell+1-2n} +
\sum_{k=1}^{2n-2} \beta_{n-2,m-1,e,\ell + 1 - 2k},$$ which is the
first part of Theorem \ref{bn}, and
\begin{align*} L_{n,m,e}(t) &= L_{n-1, m,e}(t) + t^{2n-1}L_{n-1,m,e-1}(t) +
 \sum_{k=1}^{2n-2}
t^{2k-1}L_{n-2,m-1,e}(t)\\
&= L_{n-1, m,e}(t) + t^{2n-1}L_{n-1,m,e-1}(t) + (t + t^3 + \cdots +
t^{4n-5})
L_{n-2,m-1,e}(t) \\
&= L_{n-1, m,e}(t) + t^{2n-1}L_{n-1,m,e-1}(t) +
\frac{t(t^{4n-4}-1)}{t^2-1} L_{n-2,m-1,e}(t).\end{align*} This gives
the second statement in Theorem \ref{bn}.\qed

Before embarking on the case of $D_n$, we define yet another
polynomial. Let $Y_{B_n,m,e}$ be the set of involutions in $W(B_n)$
whose expression as a product of disjoint signed cycles has $m$
transpositions and $e$ negative 1-cycles. Set

$$E_{n,m,\ell}(t) = \sum_{x \in Y_{B_n,m,e}} t^{|\Lambda(x)|}.$$

\paragraph{Proof of Theorem \ref{dn}} We need to work within the
environment of $B_n$ for the moment, because of the risk that when
we remove the cycle containing $n$ from an involution that happens
to be in the subgroup $W(D_n)$, we end up with an involution outside
of $W(D_n)$. We get round this by working in $W(B_n)$, but instead
of considering $\ell_{B_n}(x)$ or $\ell_{D_n}(x)$ for an involution
$x$, we consider $|\Lambda(x)|$. If $x$ happens to be an element of
$W(D_n)$, then by Theorem \ref{ls}, $\ell_{D_n}(x) = |\Lambda(x)|$.
Therefore we will be able, with care, to retrieve the length
polynomial for
involutions in $W(D_n)$ at the end of the process.\\

Let $Y_{B_{n},m,e,\ell}$ be the set of involutions $x$ of $W(B_n)$
satisfying $|\Lambda(y)| = \ell$, whose expression as a product of
disjoint signed cycles has $m$ transpositions and $e$ negative
1-cycles. Let $Y_{B_{n},m,e,\ell,\rho}$ be the set of involutions
$x$ in $Y_{B_n,m,e,\ell}$ such that $x(n) = \rho$. Then $$Y_{B_{n},
m, e, \ell} = \bigcup_{r=1}^n \left(Y_{B_{n}, m,e, \ell, r} \cup
Y_{B_n,m,e,\ell,(-r)}\right).$$ Clearly $|Y_{B_n, m, e, \ell, n}| =
|Y_{B_{n-1}, m, e, \ell}|$. If $\rho = -n$, then by Theorem \ref{ls}
and Theorem \ref{tech1}, the map $x \mapsto x(\mi n)$ is a bijection
between $Y_{B_n,m,e,\ell,(-n)}$ and $Y_{B_{n-1},m,e-1,\ell+2-2n}$.
If $\rho = r < n$, then by Theorem \ref{ls} and Corollary
\ref{tech3}, $x \mapsto (x(\+ r \+n))^{c_r}$ is a bijection between
$Y_{B_n,m,e,\ell,r}$ and $Y_{B_{n-2},m-1,e,\ell+1-2(n-r)}$. Finally
if $\rho = -r \neq -n$, then by Theorem \ref{ls} and Corollary
\ref{tech4}, $x \mapsto (x(\mi r \mi n))^{c_r}$ is a bijection
between $Y_{B_n,m,e,\ell,r}$ and $Y_{B_{n-2},m-1,e,\ell+5-2(n+r)}$.

Hence \begin{align*} |Y_{B_n,m,e,\ell}| &= |Y_{B_{n-1},m,e,\ell}| +
|Y_{B_{n-1},m,e-1,\ell+2-2n}| \\ & \;\;\; +
\sum_{r=1}^{n-1}\left(|Y_{B_{n-2},m-1,e,\ell+1-2(n-r)}| +
|Y_{B_{n-2},m-1,e,\ell+5-2(n+r)}|\right)\\
&= |Y_{B_{n-1},m,e,\ell}| + |Y_{B_{n-1},m,e-1,\ell+2-2n}| \\ & \;\;+
\sum_{k=1}^{n-1} \left(|Y_{B_{n-2},m-1,e,\ell + 1 - 2k}| +
|Y_{B_{n-2}, m-1, e, \ell+5 - 2n-2k}|\right).\end{align*}

We now consider the polynomial $E_{n,m,e}(t)$
 with the aim of showing that $E_{n,m,e}(t)$ is
precisely the $D_{n,m,e}(t)$ defined just before Theorem \ref{dn}.
We observe that $$E_{n,m,e}(t) = \sum_{\ell=0}^{\infty}
|Y_{B_n,m,e,\ell}|t^\ell.$$ Certainly $E_{n,0,0}(t) = 1$ and if $2m
+ e > n$, then $E_{n,m,e}(t) = 0$. Now $W(B_1) = \{(\+ 1), (\mi
1)\}$, so $E_{1,0,1}(t) = t$. The set of involutions in $W(B_2)$ is
$\{(\+ 1 \+ 2), (\mi 1 \mi 2), (\mi 1), (\mi 2), (\mi 1)(\mi 2)\}$.
Hence $E_{2,1,0}(t) = 2t$, $E_{2,0,1}(t) =  1 + t^2$, $E_{2,0,2}(t)
= t^2$. Moreover, from our recurrence relation for $|Y_{B_n, m, e,
\ell}|$ above, we get that for
$n \geq 3$, %

\begin{align*} E_{n,m,e}(t) &= E_{n-1, m,e}(t) +
t^{2n-2}E_{n-1,m,e-1}(t) +
 \sum_{k=1}^{n-1}
\left(t^{2k-1} + t^{2k + 2n-5}\right)E_{n-2,m-1,e}(t)\\
&= E_{n-1, m,e}(t) + t^{2n-2}E_{n-1,m,e-1}(t) +
(1+t^{2n-4})\sum_{k=1}^{n-1}t^{2k-1}E_{n-2,m-1,e}(t) \\
&= E_{n-1, m,e}(t) + t^{2n-2}E_{n-1,m,e-1}(t) +
\frac{t(1+t^{2n-4})(t^{2n-2}-1)}{t^2-1}
E_{n-2,m-1,e}(t).\end{align*} That is,

\begin{equation} \label{eq4} E_{n,m,e}(t) = E_{n-1, m,e}(t) + t^{2n-2}E_{n-1,m,e-1}(t) +
\frac{t(1+t^{2n-4})(t^{2n-2}-1)}{t^2-1}
E_{n-2,m-1,e}(t).\end{equation}

Now $E_{n,m,e}(t)$ has exactly the same initial conditions and
recurrence relation as $D_{n,m,e}(t)$. So the two polynomials are
the same. In particular, the coefficient of $t^\ell$ in
$D_{n,m,e}(t)$ equals the coefficient of $t^\ell$ in $E_{n,m,e}(t)$,
which by definition is $|Y_{B_n,m,e,\ell}|$. But we know that when
$e$ is even, the elements of $Y_{B_n,m,e,\ell}$ are precisely the
elements of $W(D_n)$ with length $\ell$ that have $m$ transpositions
and $e$ negative 1-cycles. Therefore $|Y_{B_n,m,e,\ell}| =
\delta_{n,m,e,\ell}$ and hence the coefficient of $t^{\ell}$ in
$D_{n,m,e}$ is $\delta_{n,m,e,\ell}$ when $e$ is even. If $e$ is
odd, then there are no involutions in $W(D_n)$ with $e$ negative
1-cycles, so $\delta_{n,m,e,\ell} = 0$. Similarly, when $e$ is odd
we have $L_{n,m,e}(t) = 0$. When $e$ is even, the fact that
$|Y_{B_n,m,e,\ell}| = \delta_{n,m,e,\ell}$ implies that
$D_{n,m,e}(t) = E_{n,m,e}(t) = L_{n,m,e}(t)$. This completes the
proof of Theorem \ref{dn}.\qed

Given that we now have expressions for the length polynomials for
involutions of every signed cycle type in $W(A_n)$, $W(B_n)$ and
$W(D_n)$, we can now produce recurrence relations for the length
polynomials $L_{W}(t)$ for the sets of all involutions in these
groups. The only potential stumbling block is $W(D_n)$. Here our
recurrence relation for $L_{n,m,e}(t)$ involves involutions with
$e-1$ negative 1-cycles, which of course are not elements of
$W(D_n)$. We work round this as follows. Define $$L_{(B\setminus
D)_n}(t) = \sum_{m, j \geq 0} E_{n,m,2j+1}(t).$$ Observe that the
coefficient of $t^{\ell}$ in $L_{(B \setminus D_n)}(t)$ is the
number of involutions $x$ in $W(B_n)$ for which $|\Lambda(x)| =
\ell$ whose expression as a product of disjoint signed cycles
contains an odd number of negative 1-cycles. These are precisely the
involutions of $W(B_n)$ which are not contained in $W(D_n)$. We may
now state and prove Corollary \ref{all}. Note that part (a) of
Corollary \ref{all} is known; it is the first
part of Proposition 2.8 in \cite{dukes}.\\

\begin{cor}\label{all} \begin{enumerate} \item[(a)]  $L_{W(A_1)}(t) = 1 + t$,
$L_{W(A_2)}(t) = 1 + 2t + t^3$, and for $n \geq 3$,
$$L_{W(A_{n})}(t) = L_{W(A_{n-1})}(t) +
\frac{t(t^{2n}-1)}{t^2-1}L_{W(A_{n-2})}(t).$$%
\item[(b)] $L_{W(B_1)}(t) = 1 + t$, $L_{W(B_2)}(t) = 1+ 2t + 2t^3 + t^4$
and for $n \geq 3$,
$$L_{W(B_n)}(t) = (1 + t^{2n-1})L_{W(B_{n-1})}(t) + \frac{t(t^{4n-4)}-1)}{t^2-1} L_{W(B_{n-2})}(t).$$
\item[(c)] $L_{W(D_1)}(t) = 1$, $L_{(B\setminus D)_1}(t) = 1$, $L_{W(D_2)}(t) = 1 + 2t$,
$L_{(B\setminus D)_2}(t) = 1 + t^2$ and for $n \geq 3$, $$L_{D_n}(t)
=  L_{W(D_{n-1})}(t) + t^{2n-2}L_{(B\setminus D)_{n-1}}(t) +
\frac{t(1+t^{2n-4})(t^{2n-2}-1)}{t^2-1} L_{W(D_{n-2})}(t)
$$ and $$L_{(B\setminus D)_n}(t) = L_{(B\setminus D)_{n-1}}(t) + t^{2n-2}L_{W(D_{n-1})}(t) +
\frac{t(1+t^{2n-4})(t^{2n-2}-1)}{t^2-1} L_{(B\setminus
D)_{n-2}}(t).$$

\end{enumerate}
\end{cor}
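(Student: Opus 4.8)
The plan is to obtain Corollary \ref{all} by summing the per-type polynomials already established, handling each of the three cases in turn. For type $A$, we use $L_{W(A_{n})}(t) = \sum_{m \geq 1} L_{n+1,m}(t)$ (together with the contribution $1$ from the identity, i.e. $L_{n+1,0}(t)=1$), substitute the recurrence from Theorem \ref{sn}, and interchange the order of summation; since the factor $\frac{t(t^{2n}-1)}{t^2-1}$ is independent of $m$, it pulls out, and the two remaining sums are precisely $L_{W(A_{n-1})}(t)$ and $L_{W(A_{n-2})}(t)$. One must be a little careful with the index shift (the theorem is stated with $n$ playing the role of $n+1$ here, since $W(A_{n-1}) \cong \sym(n)$) and with the small cases, but the base cases $L_{W(A_1)}(t) = 1+t$ and $L_{W(A_2)}(t) = 1 + 2t + t^3$ are immediate by direct inspection. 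For type $B$, the argument is essentially identical: sum the recurrence of Theorem \ref{bn} over all $m \geq 0$ and $e \geq 0$, noting that $\sum_{m,e} L_{n-1,m,e-1}(t) = \sum_{m,e} L_{n-1,m,e}(t) = L_{W(B_{n-1})}(t)$ (reindexing $e$), so the coefficient of that term combines with the leading term to give $(1+t^{2n-1})L_{W(B_{n-1})}(t)$, while the last term yields $\frac{t(t^{4n-4}-1)}{t^2-1}L_{W(B_{n-2})}(t)$; again the base cases $n=1,2$ are checked directly.

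The type $D$ case is the one that needs the extra device, and it is where the bookkeeping is most delicate. The obstruction, flagged already in the excerpt, is that the recurrence of Theorem \ref{dn} (equivalently \eqref{eq4} for $E_{n,m,e}$) relates polynomials with $e$ negative 1-cycles to polynomials with $e-1$ negative 1-cycles, so summing only over even $e$ does not close up. The fix is to carry along the auxiliary polynomial $L_{(B\setminus D)_n}(t) = \sum_{m,j\geq 0} E_{n,m,2j+1}(t)$, which by Theorem \ref{dn} (the identification $E_{n,m,e} = D_{n,m,e}$ and the vanishing of $\delta$ for odd $e$) has coefficients counting exactly the involutions of $W(B_n)$ lying outside $W(D_n)$, measured by $|\Lambda(\cdot)|$. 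Summing \eqref{eq4} over all $m \geq 0$ and all even $e = 2j$ converts $E_{n-1,m,e-1}$-sums into $L_{(B\setminus D)_{n-1}}(t)$ and the other two terms into $L_{W(D_{n-1})}(t)$ and $L_{W(D_{n-2})}(t)$, giving the first displayed $D_n$-recurrence; summing \eqref{eq4} over all $m \geq 0$ and all odd $e = 2j+1$ swaps the roles of the even and odd parity sums and yields the second, companion recurrence for $L_{(B\setminus D)_n}(t)$. The fact that $L_{W(D_n)}(t)$ really equals $\sum_{m,j} E_{n,m,2j}(t)$ — i.e.\ that using $|\Lambda(x)|$ in place of $\ell_{D_n}(x)$ is legitimate — is exactly Theorem \ref{ls} for $W(D_n)$, already available.

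In all three cases the one genuinely routine-but-necessary verification is the geometric-series manipulation $t + t^3 + \cdots + t^{2N-3} = \frac{t(t^{2N-2}-1)}{t^2-1}$ and its $B$- and $D$-analogues, but these are already carried out inside the proofs of Theorems \ref{sn}, \ref{bn} and \ref{dn}, so here they can simply be quoted. I would therefore organise the proof as: (i) state $L_{W(A_{n})}(t) = \sum_{m\geq 0} L_{n+1,m}(t)$ and derive (a) by summing Theorem \ref{sn}; (ii) state $L_{W(B_n)}(t) = \sum_{m,e\geq 0} L_{n,m,e}(t)$ and derive (b) by summing Theorem \ref{bn}, combining the first two terms; (iii) observe $L_{W(D_n)}(t) = \sum_{m,j\geq 0} E_{n,m,2j}(t)$ and $L_{(B\setminus D)_n}(t) = \sum_{m,j\geq 0} E_{n,m,2j+1}(t)$, and derive the two coupled recurrences in (c) by splitting the sum of \eqref{eq4} according to the parity of $e$; (iv) dispatch the base cases $n=1,2$ for each type by listing the relevant involutions. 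The main obstacle is purely organisational: keeping the parity split in the $D_n$ case straight and making sure the reindexing $e \mapsto e-1$ toggles the parity correctly so that the even-sum recurrence feeds on the odd-sum polynomial and vice versa.
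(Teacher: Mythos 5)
Your proposal is correct and follows essentially the same route as the paper: parts (a) and (b) by summing the recurrences of Theorems \ref{sn} and \ref{bn} over all classes (with the $e\mapsto e-1$ reindexing absorbing the second term into $(1+t^{2n-1})L_{W(B_{n-1})}(t)$), and part (c) by summing Equation \ref{eq4} separately over even and odd $e$, using exactly the auxiliary polynomial $L_{(B\setminus D)_n}(t)=\sum_{m,j}E_{n,m,2j+1}(t)$ to close the coupled system. No substantive differences from the paper's argument.
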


\begin{proof} For parts (a) and (b), we simply observe that
$L_{W(A_n)}(t) = \sum_m L_{n+1,m}(t)$, and similarly $L_{W(B_n)}(t)
= \sum_{m,e} L_{n,m,e}(t)$, and apply Theorems \ref{sn} and
\ref{bn}. For part (c), the initial values $L_{W(D_1)}(t)$,
$L_{(B\setminus D)_1}(t)$, $L_{W(D_2)}(t)$ and $L_{(B\setminus
D)_2}(t)$ are easy to calculate. For the recurrence relations we use
Equation \ref{eq4}:
$$ E_{n,m,e}(t) = E_{n-1, m,e}(t) +
t^{2n-2} E_{n-1,m,e-1}(t) + \frac{t(1+t^{2n-4})(t^{2n-2}-1)}{t^2-1}
 E_{n-2,m-1,e}(t).$$ Hence
\begin{align*} \sum_{m,j} E_{n,m,2j}(t) &= \sum_{m,j} E_{n-1, m,2j}(t) +
t^{2n-2} \sum_{m,j} E_{n-1,m,2j-1}(t)\\
& \;\; + \frac{t(1+t^{2n-4})(t^{2n-2}-1)}{t^2-1} \sum_{m,j}
E_{n-2,m-1,2j}(t)\end{align*} and \begin{align*}\sum_{m,j}
E_{n,m,2j+1}(t) &= \sum_{m,j} E_{n-1, m,2j+1}(t) + t^{2n-2}
\sum_{m,j} E_{n-1,m,2j}(t) \\ & \;\; +
\frac{t(1+t^{2n-4})(t^{2n-2}-1)}{t^2-1} \sum_{m,j}
E_{n-2,m-1,2j+1}(t).\end{align*} Now the involutions in $W(D_n)$ are
precisely the involutions in $W(B_n)$ whose signed cycle expression
has an even number $e = 2j$ of negative 1-cycles. Therefore
$L_{W(D_n)}(t) = \sum_{m,j} E_{n,m,2j}(t)$ and $L_{(B\setminus
D)_n}(t) = \sum_{m,j} E_{n,m,2j+1}(t)$. Hence we can immediately
conclude that
$$L_{W(D_n)}(t)
=  L_{W(D_{n-1})}(t) + t^{2n-2}L_{(B\setminus D)_{n-1}}(t) +
\frac{t(1+t^{2n-4})(t^{2n-2}-1)}{t^2-1} L_{W(D_{n-2})}(t)
$$ and $$L_{(B\setminus D)_n}(t) = L_{(B\setminus D)_{n-1}}(t) + t^{2n-2}L_{W(D_{n-1})}(t) +
\frac{t(1+t^{2n-4})(t^{2n-2}-1)}{t^2-1} L_{(B\setminus
D)_{n-2}}(t).$$\end{proof}

\section{The remaining finite Coxeter groups}

We first deal with the groups of type $I_n$. Here $W(I_n)$ is the
dihedral group of order $2n$. The following lemma is easy to prove.
\begin{lemma}
Let $W = W(I_n)$. If $n$ is odd, then there is one conjugacy class
of  involutions, and its length polynomial is $t^n +
\frac{2t(1-t^n)}{1-t^2}$. If $n$ is even, there are three conjugacy
classes of  involutions. One consists of the unique central
involution and has length polynomial $t^{n}$. The other two
conjugacy classes both have length polynomial
$\frac{t(1-t^n)}{1-t^2}$. Therefore
$$L_{W}(t) =  1 + t^n + \frac{2t(1-t^n)}{1-t^2}. $$

\end{lemma}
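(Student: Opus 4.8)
The plan is to work directly from the standard presentation $W(I_n) = \langle s_1,s_2 \mid s_1^2 = s_2^2 = (s_1s_2)^n = 1\rangle$ and the elementary description of the dihedral group of order $2n$. The first step is to record the length data. The $2n$ elements are the identity (length $0$), the longest element $w_0$ (length $n$), and, for each $k$ with $1 \le k \le n-1$, exactly two elements of length $k$, namely the alternating words $s_1s_2s_1\cdots$ and $s_2s_1s_2\cdots$ of length $k$. Since $\ell$ reduces mod $2$ to a homomorphism onto $\Z/2\Z$ sending each reflection to $1$, every reflection has odd length; and $W(I_n)$ has exactly $n$ reflections. Hence among the two elements of length $k$ for $1\le k\le n-1$, both are reflections when $k$ is odd and both are rotations of order $>2$ (so non-involutions) when $k$ is even; and $w_0$ is a reflection when $n$ is odd, but equals the central rotation $(s_1s_2)^{n/2}$ — the unique central involution — when $n$ is even. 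This determines all involutions and their lengths.

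The second step is the conjugacy-class analysis. Writing $\rho = s_1s_2$, from $s_1\rho s_1 = \rho^{-1}$ one gets $\rho^{j}s_1\rho^{-j} = \rho^{2j}s_1$. When $n$ is odd, $2$ is a unit modulo $n$, so the reflections $\{\rho^k s_1 : 0\le k<n\}$ form a single conjugacy class; together with the identity these are all the involutions, confirming there is one involution class. When $n$ is even, the reflections split according to the parity of $k$ into two classes of size $n/2$, and the central involution is a class by itself, giving the stated three classes. Moreover, for $n$ even the diagram automorphism swapping $s_1 \leftrightarrow s_2$ is length-preserving and interchanges the two reflection classes, so these two classes have equal length polynomials — this spares us computing each separately.

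The third step is to assemble the length polynomials by summing $t^{\ell(x)}$ over each class using the length table. For $n$ odd, the single class consists of two reflections of each odd length $1,3,\ldots,n-2$ together with $w_0$ of length $n$, giving $t^n + 2(t+t^3+\cdots+t^{n-2})$, and summing the geometric series yields the closed form in the statement. For $n$ even, the central class contributes $t^n$; the two reflection classes between them account for two reflections of each odd length $1,3,\ldots,n-1$, so by the automorphism argument each has length polynomial $t+t^3+\cdots+t^{n-1} = \frac{t(1-t^n)}{1-t^2}$. Adding the identity gives $L_W(t) = 1 + t^n + \frac{2t(1-t^n)}{1-t^2}$.

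There is no real obstacle in this lemma; it is a finite bookkeeping exercise once the dihedral length function is in hand. The only points demanding a little care are keeping straight which of the two elements of a given length $k\le n-1$ are involutions (controlled by the parity of $k$) and correctly locating $w_0$ as a reflection versus the central involution according to the parity of $n$; the diagram-automorphism remark is what lets one split the even-$n$ reflection count evenly between the two classes rather than tracking lengths class by class.
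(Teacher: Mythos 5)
The paper gives no argument for this lemma at all (it is introduced with ``The following lemma is easy to prove''), so there is no proof to compare yours against; your direct dihedral bookkeeping --- two elements of each length $1\le k\le n-1$, the observation that the involutions are exactly the $n$ reflections (all of odd length) together with the central rotation $(s_1s_2)^{n/2}$ when $n$ is even, the conjugacy analysis via $\rho^{j}s_1\rho^{-j}=\rho^{2j}s_1$, and the length-preserving graph automorphism interchanging the two reflection classes for $n$ even --- is precisely the intended elementary verification, and all of that is correct.

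The one genuine problem is your final sentence in the odd case, where you assert that summing the geometric series ``yields the closed form in the statement''. It does not: your (correct) length table gives, for $n$ odd, the class polynomial $t^{n}+2(t+t^{3}+\cdots+t^{n-2})=t^{n}+\frac{2t(1-t^{n-1})}{1-t^{2}}$, whereas the statement has $t^{n}+\frac{2t(1-t^{n})}{1-t^{2}}$, which is not even a polynomial when $n$ is odd. For instance, for $n=3$ one has $W(I_3)\cong\sym(3)$ with involution length polynomial $1+2t+t^{3}$ (agreeing with $L_{W(A_2)}(t)$ in Corollary \ref{all}(a) and with the profile $[2,1]$ for $I_{2k+1}$ in Table \ref{oddExc}), while $1+t^{3}+\frac{2t(1-t^{3})}{1-t^{2}}$ is not a polynomial. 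So your computation is right and it is the printed formula that is off: for $n$ odd the numerator $1-t^{n}$ should read $1-t^{n-1}$, both in the class polynomial and in the displayed $L_{W}(t)$; as printed, those formulas are correct only for $n$ even, and your verification of the even case (each reflection class contributing $t+t^{3}+\cdots+t^{n-1}=\frac{t(1-t^{n})}{1-t^{2}}$, plus $t^{n}$ for the centre and $1$ for the identity) is fine. Rather than claiming agreement, you should have flagged this discrepancy explicitly; papering over a mismatch between your own correct computation and the target formula is exactly the kind of step a referee will catch.
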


The remaining exceptional finite Coxeter groups are types $E_6, E_7,
E_8$, $F_4$, $H_3$ and $H_4$. We have used the computer algebra
package {\sc Magma}\cite{magma} to calculate the length polynomials
here. For a conjugacy class $X$ in $W$, where $W$ is one of these
groups, we write $a_{W,X,\ell}$ for the coefficent of $t^{\ell}$ in
the length polynomial. That is,
$$L_{W,X}(t) = \sum_{x \in X} t^{\ell(x)} = \sum a_{W,X,\ell} t^{\ell}.$$
The sequence $[a_{W,X,\ell}]$ (starting at the smallest nonzero
term) we refer to as the `length profile' of $X$ in $W$. Involutions
in a given conjugacy class have lengths of the same parity (either
all odd length or all even length), and so when writing down the
length profiles we would get alternating zeros. We suppress these,
and write the `odd length profile' (the sequence $[a_{W,X,2k+1}]$)
or `even length profile' (the sequence $[a_{W,X,2k}]$) as
appropriate. As a small example, in the dihedral group of order 8,
each conjugacy class of reflections has length profile $[1,0,1]$,
where the smallest length is 1. So its odd length profile is
$[1,1]$. The involution length profile (including the identity) of
the whole dihedral group of order 8 is $[1,2,0,2,1]$, so its odd
length profile is $[2,2]$ and its even
length profile is $[1,0,1]$. \\

The length profiles of conjugacy classes in the exceptional groups
of types $E_6$, $E_7$, $E_8$, $F_4$, $H_3$ and $H_4$ are given in
Tables \ref{tablee6} -- \ref{tableh4}. For these groups $W$, it is
well known that every nontrivial involution in $W$ is conjugate to
the central involution of some standard parabolic subgroup. For each
conjugacy class $X$ of involutions, the class is indicated by giving
(up to isomorphism) the relevant standard parabolic subgroup, the
size of the class, and the minimum length $\ell_{\min}$ of elements
in the class. This information is nearly always enough to specify
$X$ uniquely. Where it is not (and this occurs in type $F_4$), the
length profiles of the given conjugacy classes are happily
identical, by virtue of the length-preserving automorphism of the
Coxeter graph.\\

We begin with the table for $W(E_6)$.

\begin{table}[hbt]
\begin{center}
\begin{tabular}{cccl}
\hline class & size & $\ell_{\min}$ & odd/even length
profile\\ \hline %
$A_1$ & 36 &  1 & [6,5,5,5,4,3,3,2,1,1,1]\\
$A_1^2$ & 270 & 2 & [10,15,21,28,31,30,31,28,22,18,16,10,6,3,1]\\
$A_1^3$ & 540 & 3 & [5, 10, 17, 28, 40, 48, 56, 60, 58, 53, 49, 41, 32, 22, 13, 6, 2]\\
$D_4$ & 45  & 12 & [1,2,3,4,5,5,5,5,5,4,3,2,1]\\ \hline
\end{tabular} \caption{Involutions in $W(E_6)$}\label{tablee6}
\end{center}
\end{table}

Note that types $E_7$, $E_8$, $F_4$, $H_3$ and $H_4$ all have
non-trivial centres. This means that multiplication by the central
involution will map a given conjugacy class $X$ to one of equal size
with the length profile reversed. We exploit this in Tables
\ref{tablee7} --
 \ref{tableh4}.

\afterpage{\clearpage}

\begin{table}[H!]
\begin{center}
\begin{tabular}{cccl}
\hline class & size & $\ell_{\min}$ & odd/even length
profile\\ \hline %
$A_1$ & 63 &  1 & [7,6,6,6,6,5,5,4,4,3,3,2,2,1,1,1,1]\\
$A_1^2$ & 945 & 2 & [15, 24, 34, 44, 55, 60, 67, 68, 71, 68, 68, 62,
59, 50, 44, 38,\\ &&& \;\; 35, 26, 20, 14, 10, 6, 4, 2,
1]\\
$A_1^3$ & 315 & 3 &
[1,2,4,6,9,11,14,16,19,20,22,22,23,22,22,20,19,16,14,11,9,6,4,2,1]\\
$A_1^3$ & 3780 & 3 &
[10,22,39,61,91,119,152,180,209,228,248,257,265,259,251,235,\\ &&&
\;\;  222,198,175,147,122,94,72,51,35,20,11,5,2]
\\
$A_1^4$ & 3780 & 4 & reverse of class $A_1^3$, size 3780\\
$D_4$ & 315 & 12 & reverse of class $A_1^3$, size 315\\
$D_4 \times A_1$ & 945 & 13 & reverse of class $A_1^2$\\
$D_6$ & 63 & 30 & reverse of class $A_1$\\
$E_7$ & 1 & 63 & [1] \\ \hline
\end{tabular}
\caption{Involutions in $W(E_7)$} \label{tablee7}
\end{center}
\end{table}

\begin{table}[h!]
\begin{center}
\begin{tabular}{cccl}
\hline class & size & $\ell_{\min}$ & odd/even length
profile\\ \hline %
$A_1$ & 120 &  1 & [ 8, 7, 7, 7, 7, 7, 7, 6, 6, 6, 6, 5, 5, 4, 4, 4,
4, 3, 3, 2, 2, 2, 2, \\ &&& \; 1, 1, 1, 1, 1, 1 ]
\\
$A_1^2$ & 3780 & 2 & [21,35,50,65,80,95,111,120,130,140,151,155,161,
160,161,162, \\
&&& \; 164,159,157,148,141,134,129,117,108,99,92,85,80,68,59,50,\\
&&& \; 42,34,28,22,18,14,11,8,6,4,3,2,1]\\
$A_1^3$ & 37800 & 3 & [21, 49, 89, 141, 205, 279, 369, 460, 556,
656, 766, 868, 973, \\ &&& \;  1065, 1154, 1237, 1320, 1383,
1443, 1482, 1510, 1521, \\
&&& \;  1528, 1510, 1483, 1442, 1399, 1346, 1295, 1225, 1153,
1072, \\
&&& \;  989, 896, 805, 711, 625, 540, 464, 391, 326, 265, 215,  \\
&&& \; 170,
131, 95, 67, 45, 30, 18, 10, 5, 2]\\
$A_1^4$ & 113400 & 4 & [7, 20, 43, 80, 135, 207, 303, 420, 559, 719,
907, 1112, 1337, \\
&&& \;  1571, 1819, 2078, 2352, 2621, 2892, 3152, 3404, 3634,
3849,\\
&&& \;  4027, 4175, 4283, 4365, 4412, 4434, 4412, 4365, 4283,
4175,\\
&&& \;  4027, 3849,  3634, 3404, 3152, 2892, 2621, 2352, 2078,
1819,\\
&&& \  1571, 1337, 1112, 907, 719, 559, 420, 303, 207, 135, 80, 43,
20, 7]
\\
$D_4$ & 3150 & 12 & [1, 2, 4, 7, 11, 15, 21, 27, 34, 41, 49, 56, 65,
73, 82, 90, 99,105,  \\ &&& \;\;  112, 117, 122, 124, 127, 127, 128,
127, 127, 124, 122, 117, 112,\\
&&& \; 105, 99, 90, 82, 73, 65, 56, 49, 41, 34, 27, 21, 15, 11, 7,
4, 2, 1]
\\
$D_4 \times A_1$ & 37800 & 13 & reverse of class $A_1^3$\\
$D_6$ & 3780 & 30 & reverse of class $A_1^2$\\
$E_7$ & 120 & 63 & reverse of class $A_1$\\
$E_8$ & 1  & 120 & [1] \\ \hline
\end{tabular}
 \caption{Involutions in $W(E_8)$}\label{tablee8}
\end{center}
\end{table}

\afterpage{\clearpage} \pagebreak

The table for $W(E_8)$ is Table \ref{tablee8}. Note that the longest
element maps the class corresponding to $D_4$ to itself, and the
class corresponding to $A_1^4$ to itself. Therefore the length
profile for these classes are symmetric. The same occurs in the
class corresponding to $A_1^2$ in $W(F_4)$, as shown in Table
\ref{tablef4}.

\begin{table}[hbt]
\begin{center}
\begin{tabular}{cccl}
\hline class & size & $\ell_{\min}$ & odd/even length
profile\\ \hline %
$A_1$ & 12 &  1 & [2,2,2,2,1,1,1,1]\\
&&& (two such classes)\\
$A_1^2$ & 72 &  2 & [3,4,6,10,9,8,9,10,6,4,3]\\
$B_2$ & 18 & 4 & [1,2,3,2,2,2,3,2,1]\\
$B_3$ & 12 & 9 & reverse of class $A_1$\\
&&& (two such classes)\\
$F_4$ & 1  & 24 & [1]\\ \hline
\end{tabular}
 \caption{Involutions in $W(F_4)$} \label{tablef4}
\end{center}
\end{table}

Finally, we deal with $W(H_3)$ and $W(H_4)$. In $W(H_4)$ the class
corresponding to $A_1^2$ is mapped to itself by the longest element,
so the length profile for this class is symmetric.

\begin{table}[hbt]
\begin{center}
\begin{tabular}{cccl}
\hline class & size & $\ell_{\min}$ & odd/even length
profile\\ \hline %
$A_1$ & 15 &  1 & [3, 3, 3, 2, 2, 1, 1]\\
$A_1^2$ & 15 &  2 & reverse of class $A_1$\\
$H_3$ & 1  & 15 & [1]\\ \hline
\end{tabular}
 \caption{Involutions in $W(H_3)$}\label{tableh3}
\end{center}
\end{table}

\begin{table}[hbt]
\begin{center}
\begin{tabular}{cccl}
\hline class & size & $\ell_{\min}$ & odd/even length
profile\\ \hline %
$A_1$ & 60 &  1 & [4,4,4,4,4,4,4,3,3,3,3,3,3,2,2,2,2,1,1,1,1,1,1]\\
$A_1^2$ & 450 &  2 & [3,5,7,9,11,14,18,18,18,19,21,23,25,23,22,\\
& & & 23,25,23,21,19,18,18,18,14,11,9,7,5,3]\\
$H_3$ & 60  & 15 & reverse of class $A_1$\\
$H_4$ & 1  & 15 & [1]\\ \hline
\end{tabular}
 \caption{Involutions in $W(H_4)$}\label{tableh4}
\end{center}
\end{table}

There are various conjectures and some results about the odd and
even involution length profiles for classical Weyl groups. We will
discuss these in Section 6. For that reason, we include here the
tables of odd and even length profiles for the exceptional groups.
The odd involution length profiles of the exceptional groups are
given in Table \ref{oddExc}. The even involution length profiles of
the exceptional groups are given in Table \ref{evenExc}.

\begin{table}
\begin{tabular}{ll}Type & Profile\\
\hline
$E_6$ & [6,10,15,22,32,43,51,58,61,59,54,49,41,32,22,13,6,2]\\
$E_7$ &
[7,17,30,49,73,105,136,172,204,237,261,286,301,315,317,312,\\
& 300,291,273,251,226,199,171,144,120,96,75,55,39,26,15,1]\\
$E_8$ &
[8,28,56,96,148,212,288,380,476,580,692,816,940,1072,1200,1328,1456,\\
& 1588,1712,1836,1948,2052,2148,2240,2316,2380, 2432,2472,2500,
2520,\\
& 2520,2500,2472,2432,2380,2316,2240,2148,2052,1948,1836,1712,1588,\\
& 1456,1328,1200,1072,940,816,692,580,476,380,288,212,148,
96,56,28,8]
\\
$F_4$ & [4,4,4,4,4,4,4,4,4,4,4,4]\\
$H_3$ & [3,3,3,2,2,1,1,1]\\
$H_4$ &
[4,4,4,4,4,4,4,4,4,4,4,4,4,4,4,4,4,4,4,4,4,4,4,4,4,4,4,4,4,4]
\\
$I_{2k}$ & [2,2, \ldots, 2,2]\\
$I_{2k+1}$ & [2,2,\ldots,2,1]\\
\end{tabular}\caption{Odd involution length profiles in exceptional groups}\label{oddExc}\end{table}

\begin{table}
\begin{tabular}{ll} Type & Profile\\
\hline
$E_6$ & [1,10,15,21,28,31,31,33,31,26,23,21,15,11,8,5,3,2,1]\\
$E_7$ &
[1,15,26,39,55,75,96,120,144,171,199,226,251,273,291,300,312,\\
& 317,315,301,286,261,237,204,172,136,105,73,49,30,17,7]\\
$E_8$
&[1,21,35,50,65,80,96,113,124,137,151,166,176,188,194,203,213,\\
& 223,228,236,238,242,247,252,251,253,255,258,262, 266,264, 266,\\
& 262,258,255,253,251,252,247,242,238,236,228,223,213,203,194,\\
& 188,176,166,151,137,124,113,96,80,65,50,35,21,1]\\
$F_4$ & [1,3,5,8,13,11,10,11,13,8,5,3,1]\\
$H_3$ & [1,1,1,2,2,3,3,3]\\
$H_4$ & [1, 3, 5, 7, 9, 11, 14, 18, 18, 18, 19, 21, 23, 25, 23, 22,\\
& 23, 25, 23, 21, 19, 18, 18, 18, 14, 11, 9, 7, 5, 3, 1 ]
\\
$I_{2k}$ & [1,0,\ldots,0,1]\\
$I_{2k+1}$ & [1]
\end{tabular}\caption{Even involution length profiles (including the identity element) in exceptional groups}\label{evenExc}\end{table}

\pagebreak

\section{Unimodality}

Various conjectures have been made concerning the unimodality and/or
log-concavity of the coefficients of involution length polynomials
in the classical groups. Note that a sequence $(x_i)_{i=1}^N$ of
non-negative integers is {\em unimodal} if for some $j$, $x_1  \leq
\cdots \leq x_{j-1} \leq x_j \geq x_{j+1} \geq \cdots \geq x_N$, and
{\em log-concave} if for all $i$ between $2$ and $N-1$ we have
$x_i^2 \geq x_{i-1}x_{i+1}$ (see \cite{stanley}). A log-concave
sequence is always unimodal, but not vice versa.

\begin{conj}[\cite{dukes} Conjecture 4.1 (ii) -- (iv)] \label{duck}
\begin{enumerate} \item[(a)] The even involution length profile and the
odd involution length profile of $W(A_n)$ are log-concave.
\item[(b)] The even involution length profile and
the odd involution length profile in $W(B_n)$ are unimodal.
\item[(c)] The even involution length profile and the odd involution
length profile in $W(D_n)$ are unimodal.
\end{enumerate}
\end{conj}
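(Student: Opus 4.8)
The conjecture has three parts of rather different character, and — given that the abstract already promises a counterexample — the honest plan is to treat (a) and (b) as statements to prove (or at least reduce to a manageable invariant) and (c) as a statement to test. In all three cases the engine is the family of recurrences for the summed polynomials $L_{W(A_n)}(t)$, $L_{W(B_n)}(t)$, $L_{W(D_n)}(t)$ and the auxiliary $L_{(B\setminus D)_n}(t)$ recorded in Corollary~\ref{all}. Since each involution conjugacy class has constant length parity, writing every such polynomial as the sum of its even and odd parts is legitimate, and the multipliers appearing in Corollary~\ref{all} — namely $1+t^{2n-1}$ and the geometric-type sums $\tfrac{t(t^{2n}-1)}{t^2-1}=\sum_{k=1}^{n}t^{2k-1}$, $\tfrac{t(t^{4n-4}-1)}{t^2-1}=\sum_{k=1}^{2n-2}t^{2k-1}$, $\tfrac{t(1+t^{2n-4})(t^{2n-2}-1)}{t^2-1}$ — interact with this decomposition in a controlled way (odd-power factors swap parity, the factor $1+t^{2n-1}$ contributes to both). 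So step one, uniformly, is to pass to coupled recurrences among the odd and even length profiles.

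For part (a) I would attempt induction on $n$ from $L_{W(A_n)}(t)=L_{W(A_{n-1})}(t)+\big(\sum_{k=1}^{n}t^{2k-1}\big)L_{W(A_{n-2})}(t)$, proving log-concavity of each parity part. The immediate obstruction — and the real crux of the section — is that log-concave sequences are \emph{not} closed under addition: each summand on the right is a nonnegative log-concave polynomial whose support has no internal gaps, yet their sum can still fail $x_i^2\ge x_{i-1}x_{i+1}$. So a naive induction cannot work; one needs a strengthened hypothesis, for instance simultaneous control of the consecutive ratios $x_{i+1}/x_i$ of the two polynomials being added, or a direct combinatorial injection witnessing log-concavity on involutions. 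I expect this to be the main obstacle, and it is quite plausible that one can only verify (a) for $n$ below some bound and leave the general case open.

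Part (b) asks only for unimodality, and here I would again run the induction, now from $L_{W(B_n)}(t)=(1+t^{2n-1})L_{W(B_{n-1})}(t)+\big(\sum_{k=1}^{2n-2}t^{2k-1}\big)L_{W(B_{n-2})}(t)$, but exploit the structural fact used throughout Section~5 that multiplication by the longest (central) element reverses a class's length profile. This forces the summed profiles to be \emph{palindromic}, hence automatically unimodal once the peak sits at the centre; tracking where the summands' peaks lie relative to the centre, and checking that the valleys between them do not dip, should push unimodality through with bookkeeping rather than a new idea.

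Part (c) is where the plan deviates: rather than prove it, iterate the coupled recurrences of Corollary~\ref{all}(c) for $L_{W(D_n)}(t)$ and $L_{(B\setminus D)_n}(t)$ from the stated seed values, extract the odd and even length profiles of $W(D_n)$, and test unimodality directly. That a genuinely ``double-peaked'' profile is possible for involutions is already visible in Section~5: the conjugacy class $A_1^2$ in $W(F_4)$ has length profile $[3,4,6,10,9,8,9,10,6,4,3]$ (Table~\ref{tablef4}), which is not unimodal. I therefore expect the computation to produce, for some modest $n$, an even (or odd) involution length profile of $W(D_n)$ of exactly this shape, so that the outcome of the section is: (a) and (b) survive (the latter provably, via the palindromicity argument), while (c) is \emph{false}, refuted by the smallest such $D_n$. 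The only obstacle in (c) is running the recurrence far enough and displaying the first failure cleanly.
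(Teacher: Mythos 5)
This is a conjecture quoted from \cite{dukes}; the paper does not prove it, it \emph{refutes} part (b). Using the recurrence of Corollary~\ref{all}(b) the authors compute the even involution length profile of $W(B_6)$ to be $[1,10,20,27,35,41,49,51,55,54,55,51,49,41,35,27,20,10,1]$, which is not unimodal, so Conjecture~\ref{duck}(b) is false. Parts (a) and (c) are left unresolved, having been checked computationally for $n\le 10$; in type $D$ only certain individual conjugacy classes in $W(D_8)$ fail unimodality, not the full even or odd involution profile of any $W(D_n)$ tested. Your plan therefore gets the outcome exactly backwards: you predict that (b) is provable and that (c) is the part that fails.

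The step that sinks your argument for (b) is the claim that palindromicity of the summed profile forces unimodality ``once the peak sits at the centre.'' That conditional clause is the entire difficulty: a palindromic nonnegative sequence can have a local \emph{minimum} at its centre, and the $W(B_6)$ profile above is precisely such a sequence (the middle reads $55,54,55$). Symmetry buys you nothing without an additional argument locating the peak at the centre, and no such argument can exist because the statement is false. Your identification of the engine (the coupled recurrences of Corollary~\ref{all} and the parity decomposition), your diagnosis of the obstruction in (a) (log-concave sequences are not closed under addition), and your instinct that the section culminates in a computed counterexample are all sound --- but the counterexample lives in type $B$, not type $D$.
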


To test these conjectures, and to see if they can be generalised, we
have checked the classical groups of types $A_n$,
$B_n$ and $D_n$ up to $n=10$, and all the exceptional groups.\\

These are the only examples we know of for even or odd involution
length profiles in finite irreducible Coxeter groups that are not
unimodal:
\begin{enumerate} \item Even length involutions in $W(B_6)$. Using
Corollary \ref{all} the even length profile is calculated to be
$$[1,10,20,27,35,41,49,51,55,54,55,51,49,41,35,27,20,10,1].$$
\item Even length involutions in types $E_8$, $F_4$, $H_4$ and $I_{n}$, for $n$ even.
\end{enumerate}

These are the only examples we know of for conjugacy classes of
involutions in finite irreducible Coxeter groups whose even or odd
length profiles are not unimodal:

\begin{enumerate}
\item The class corresponding to $A_1^2$ in $W(E_6)$.
\item The classes corresponding to $A_1^2$ and $D_6$ in $W(E_8)$.
\item The classes corresponding to $A_1^2$ and $B_2$ in $W(F_4)$.
\item The class corresponding to $A_1^2$ in $W(H_4)$.
\item The conjugacy classes of $(\mi 1)(\mi 2)(\+ 3 \+ 4)$ and of
$(\mi 1)(\mi 2)(\mi 3)(\mi 4)(\+ 5 \+ 6)$ in $W(D_8)$.
\end{enumerate}

We can therefore immediately say that not all conjugacy classes of
involutions in finite irreducible Coxeter have unimodal length
profiles, and that the even involution length profile of a Coxeter
group is not always unimodal. In particular, Conjecture
\ref{duck}(b) is false. However, on current data, we can make the
following conjectures.

\begin{conj} \begin{enumerate}\item[(i)]
If $X$ is a conjugacy class of involutions in $W(A_n)$ or $W(B_n)$,
then the even/odd length profile of $X$ is unimodal.

\item[(ii)] If $X$ is the set of involutions of odd length in a finite
Coxeter group, then the odd length profile of $X$ is unimodal.
\end{enumerate}
\end{conj}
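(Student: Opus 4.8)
The plan is to attack part~(i) by induction on $n$ using the recurrences of Theorems~\ref{sn} and~\ref{bn}, and to reduce part~(ii) to the irreducible case and thence to part~(i) together with the tabulated data. The first step is a simplifying observation: iterating Lemma~\ref{ngh} over the disjoint cycles of an involution, and noting that a positive transposition contributes an odd number of long roots to $N$ while a negative $1$-cycle contributes an even number of long roots but one short root, one sees that every involution of $W(A_{n-1})$ with $m$ transpositions has length $\equiv m\pmod 2$ and every involution of $W(B_n)$ with $m$ transpositions and $e$ negative $1$-cycles has length $\equiv m+e\pmod 2$. Hence $L_{n,m}(t)$ and $L_{n,m,e}(t)$ have nonzero coefficients in only one residue class mod~$2$, so the ``even/odd length profile'' of a conjugacy class is simply the sequence of its nonzero coefficients, and part~(i) asserts exactly that these coefficient sequences are unimodal.

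For part~(i): writing $L_{n-2,m-1}(t)=\sum_j a_jt^{m-1+2j}$, Theorem~\ref{sn} gives $L_{n,m}(t)=L_{n-1,m}(t)+t(1+t^2+\cdots+t^{2n-4})L_{n-2,m-1}(t)$, in which the coefficient of $t^{m+2j}$ in the second summand is the sliding window sum $a_j+a_{j-1}+\cdots+a_{j-(n-2)}$. An elementary lemma — which I would prove first — says that a sliding window sum $b_j=a_{j-N+1}+\cdots+a_j$ of a unimodal sequence is unimodal; this follows from $b_{j+1}-b_j=a_{j+1}-a_{j-N+1}$ by a short case analysis on the position of the window relative to the peak of $(a_j)$. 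So inductively the second summand has unimodal coefficient sequence, and likewise in the type~$B$ recurrence of Theorem~\ref{bn}, whose extra term $t^{2n-1}L_{n-1,m,e-1}(t)$ is merely a shift. The crux — and the reason this is still only a conjecture — is the \emph{outer} sum $L_{n-1,m}(t)+(\text{window sum})$: a sum of two unimodal sequences need not be unimodal, so the induction must carry extra information, most plausibly an interval containing the peak of each $L_{n,m}(t)$, chosen so that $L_{n-1,m}(t)$ lies under the ascending part of the window-sum term on the left and under its descending part on the right (``synchronised peaks''). An alternative would be to seek, on the space spanned by the class graded by $\tfrac12(\ell-\ell_{\min})$, a linear order-raising operator that is injective up to the peak rank and surjective beyond it — a Lefschetz-type operator — but the absence of rank-symmetry for a general conjugacy class, visible already in Tables~\ref{tablee6}--\ref{tableh4}, means the standard constructions used for the Poincar\'e polynomial do not transfer. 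I expect this peak-alignment issue, not the window-sum lemma or the bookkeeping, to be the genuinely hard step.

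For part~(ii): by the direct-product decomposition of Section~2, an odd-length involution of $W=W_1\times\cdots\times W_k$ has an odd number of coordinates of odd length, so the odd-length-involution generating function of $W$ is built from the even and odd parts of the $L_{W_i}$ by sums of products; reducing to the irreducible case therefore already needs a closure property — unimodality of such combinations — which is delicate because even parts need not be unimodal (witness $W(B_6)$ above). Granting the reduction, for $W$ of exceptional type or $W=W(I_n)$ the odd length profiles appear in Table~\ref{oddExc} and are manifestly unimodal. For $W$ of type $A_n$ or $B_n$ the set of odd-length involutions is the union of the conjugacy classes of odd class-parity, so its odd length profile is the sum of the (conjecturally unimodal, by part~(i)) profiles of those classes, and one is back to the peak-alignment problem. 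For $W$ of type $D_n$ it is harder still: some individual conjugacy-class profiles in $W(D_n)$ are not unimodal (the two classes in $W(D_8)$ listed above), so one cannot argue class by class and must exploit compensation within the union itself. In every regime the obstruction is the same: controlling, and proving compatible, the locations of the peaks of the constituent length polynomials.
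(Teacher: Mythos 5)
This statement is a \emph{conjecture} in the paper: no proof is given, only supporting computational evidence (types $A_n$, $B_n$, $D_n$ checked up to $n=10$, all exceptional groups via Tables \ref{oddExc} and \ref{evenExc}, together with the explicit counterexamples showing that the neighbouring statements --- unimodality of even profiles, or of arbitrary class profiles in type $D$ --- fail). Your submission, to your credit, does not pretend otherwise; but it is a research plan rather than a proof, so as an answer to ``prove this statement'' it has a genuine and unclosed gap, and you should be aware that the paper does not contain the missing ingredient either.

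Concretely: your preliminary reductions are sound. The parity observation (length $\equiv m \pmod 2$ in type $A$, $\equiv m+e \pmod 2$ in type $B$) is correct, so each class polynomial is supported on one residue class and the profile is just its nonzero coefficient sequence. The sliding-window lemma is also correct --- multiplication by $1+t^2+\cdots+t^{2N-2}$ preserves unimodality of a nonnegative sequence, by exactly the difference computation $b_{j+1}-b_j=a_{j+1}-a_{j+1-N}$ you indicate --- so the second and third summands in the recurrences of Theorems \ref{sn} and \ref{bn} are inductively unimodal. But the step you correctly flag as ``the crux'' is the entire content of the conjecture: the outer sum $L_{n-1,m}(t)+(\text{window term})$ is a sum of two unimodal sequences with \emph{a priori} unrelated peaks, and no peak-location invariant is proposed, let alone proved to propagate through the recursion. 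The paper's own data shows this is not a formality: the even involution profile of $W(B_6)$ and the two conjugacy classes in $W(D_8)$ listed in Section 6 are sums/instances of exactly this shape that fail to be unimodal, so any successful invariant must be sharp enough to distinguish the conjectured cases from these. The same unresolved issue recurs in your part (ii), both in the reduction across direct products (where odd parts arise as sums of products of even and odd parts of the factors) and in the union over conjugacy classes within an irreducible group. Until a peak-synchronisation statement is formulated and proved by induction, neither part is established; citing the tables handles only the finitely many exceptional types.
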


We end with a small step (extending results of \cite{dukes}) towards
addressing these questions.

\begin{thm}\label{logc}

Let $n \geq 2$ be even. Let $W$ be of type $A_{n-1}, B_n$ or $D_n$,
and let $X$ be the set of involutions in $W$ with no 1-cycles. Then
\begin{align*} L_{W(A_{n-1}),X}(t) &= t^{n/2}\prod_{k=1}^{n/2} \frac{t^{4k-2} - 1}{t^2-1};\\
L_{W(B_n),X}(t) &= t^{n/2}\prod_{k=1}^{n/2} \frac{t^{8k-4}-1}{t^2-1};\\
L_{W(D_n),X}(t) &= t^{n/2}\prod_{k=1}^{n/2}
\frac{(1+t^{4k-4})(t^{4k-2} -1)}{t^2-1}.\end{align*} Hence the
sequences of odd-power and even-power coefficients of $L_{W,X}(t)$
are symmetric, unimodal and, in the case of $W(A_{n-1})$ and
$W(B_{n})$, log-concave.\end{thm}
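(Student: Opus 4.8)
The plan is to obtain each of the three product formulas by specialising the recurrence relations already established: Theorem \ref{sn} for $W(A_{n-1})$, Theorem \ref{bn} for $W(B_n)$, and the $D_{n,m,e}(t)$ recurrence (equal to $E_{n,m,e}(t)=L_{n,m,e}(t)$ by Theorem \ref{dn}) for $W(D_n)$. The set $X$ of involutions with no $1$-cycles and (necessarily) $n/2$ transpositions corresponds to the polynomials $L_{n,n/2}(t)$ in type $A_{n-1}$, $L_{n,n/2,0}(t)$ in type $B_n$, and $D_{n,n/2,0}(t)$ in type $D_n$. In each case the "no $1$-cycle" constraint $e=0$ kills the middle term of the recurrence (the $L_{n-1,m,e-1}$ term, which would require $e-1=-1$), and the maximality of $m=n/2$ kills the first term $L_{n-1,m,e}$ (since an involution in $W(A_{n-2})$, $W(B_{n-1})$ or $W(D_{n-1})$ with $n/2$ transpositions needs at least $n-1$ points, impossible). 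So only the last term survives, giving a clean one-term recurrence relating the $n$ case to the $n-2$ case.

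Concretely, for type $A$ one gets $L_{n,n/2}(t) = \frac{t(t^{2n-2}-1)}{t^2-1}\,L_{n-2,(n-2)/2}(t)$; iterating from the base case $L_{2,1}(t)=t$ and collecting factors yields $t^{n/2}\prod_{k=1}^{n/2}\frac{t^{4k-2}-1}{t^2-1}$ after re-indexing $n-2,n-4,\dots,2$ as $k=n/2,n/2-1,\dots,1$ (so $2n-2=4k-2$ when the $k$-th factor comes from the step at size $2k$). For type $B$, the surviving recurrence is $L_{n,n/2,0}(t)=\frac{t(t^{4n-4}-1)}{t^2-1}L_{n-2,(n-2)/2,0}(t)$, and iterating from $L_{2,1,0}(t)=t+t^3=\frac{t(t^4-1)}{t^2-1}$ gives $t^{n/2}\prod_{k=1}^{n/2}\frac{t^{8k-4}-1}{t^2-1}$ (here $4n-4=8k-4$ at size $2k$). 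For type $D$, the $E_{n,m,e}$ recurrence \eqref{eq4} with $e=0$, $m=n/2$ reduces to $D_{n,n/2,0}(t)=\frac{t(1+t^{2n-4})(t^{2n-2}-1)}{t^2-1}D_{n-2,(n-2)/2,0}(t)$, and iterating from $D_{2,1,0}(t)=2t=(1+t^0)\cdot t$ gives $t^{n/2}\prod_{k=1}^{n/2}\frac{(1+t^{4k-4})(t^{4k-2}-1)}{t^2-1}$. These are all straightforward inductions; the only care needed is checking the base case $n=2$ matches the $k=1$ factor (in type $D$ the $1+t^{0}=2$ accounts for the coefficient $2$ in $D_{2,1,0}=2t$).

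For the final sentence, note each factor $\frac{t^{2a}-1}{t^2-1}=1+t^2+\cdots+t^{2a-2}$ is a symmetric, log-concave (indeed flat) polynomial in $t^2$, and $1+t^{4k-4}$ is likewise symmetric in $t^2$; a product of symmetric polynomials is symmetric, so after the overall factor $t^{n/2}$ the odd-power and even-power coefficient sequences are each symmetric. For unimodality and log-concavity one uses the standard facts (see \cite{stanley}) that a product of log-concave polynomials with no internal zeros is log-concave, and log-concave implies unimodal: in types $A$ and $B$ every factor $1+t^2+\cdots+t^{2a-2}$ has no internal zero coefficients, so the product (viewed as a polynomial in $t^2$) is log-concave, hence unimodal, and this transfers to the even-power (or odd-power, according to the parity of $n/2$) coefficient subsequence of $L_{W,X}(t)$. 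For type $D$ the extra factors $1+t^{4k-4}$ introduce internal zeros, so log-concavity can fail, but the problem only claims unimodality there, which still follows since a product of symmetric unimodal polynomials in the same variable $t^2$ is symmetric and unimodal. The main obstacle is simply bookkeeping: getting the re-indexing of the iterated product exactly right so the exponents come out as $4k-2$, $8k-4$, $4k-2$ and $4k-4$ respectively, and correctly handling the type-$D$ base case and the distinction between the "log-concave" claim (only $A$, $B$) and the weaker "unimodal" claim (all three).
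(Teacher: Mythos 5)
Your proposal is correct and follows essentially the same route as the paper: specialise the recurrences of Theorems \ref{sn}, \ref{bn} and \ref{dn} using $L_{W,X}(t)=0$ when $e<0$ or $2m>n$ so that only one term survives, iterate down to the $n=2$ base cases to get the products, and then apply Stanley's results (Proposition 1 for symmetric unimodal factors with non-negative coefficients, Proposition 2 for log-concave factors with no internal zeros) exactly as the paper does, including the observation that the $1+t^{4k-4}$ factors in type $D$ block the log-concavity argument.
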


The result for $L_{W(A_n),X}(t)$ is the second part of Proposition
2.8 and Theorem 2.10 of \cite{dukes}. The result for
$L_{W(D_n),X}(t)$ is Theorems 3.3 and 3.4 of \cite{dukes}. The
following proof is simply an extension of the arguments given there
to include the case of $W(B_n)$, though our derivation of the
formula for $W(D_n)$ is somewhat shorter.

\begin{proof}
Note that $X$ is just the conjugacy class (or union of two classes
for $W(D_n)$) where $e = 0$ and $n  = 2m$. Exactly one of the
sequences will consist entirely of zeros and so will be trivially
log-concave. The expressions for $L_{W,X}(t)$  follow from Theorems
\ref{sn}, \ref{bn} and \ref{dn}, because of the fact that
$L_{W,X}(t) = 0$ whenever $e < 0$ or $2m < n$ leaves just one term
in the recursion. So we have that
\begin{align*} L_{W(A_{n-1}),X}(t) &= \frac{t(t^{2n-2} - 1)}{t^2-1}L_{W(A_{n-3}),X}(t);\\
L_{W(B_n),X}(t) &= \frac{t(t^{4n-4}-1)}{t^2-1}L_{W(B_{n-2}),X}(t);\\
L_{W(D_n),X}(t)
&=\frac{t(1+t^{2n-4})(t^{2n-2}-1)}{t^2-1}L_{W(D_{n-2}),X}(t).\end{align*}
The product expression for each polynomial now follows by induction,
noting that for the base case $n=2$ we have $L_{W(A_1),X}(t) = t =
\frac{t(t^{4\times 1 - 2} - 1)}{t^2 -1}$, $L_{W(B_2),X}(t) = t + t^3
= \frac{t(t^{8 \times 1 - 4}-1)}{t^2-1}$ and $L_{W(D_2),X}(t) = 2t =
\frac{t(1 + t^{4\times 1 - 4})(t^{4\times 1 - 2}-1)}{t^2 - 1}$.\\

 It remains to show that the sequences of coefficients of these
polynomials are symmetric, unimodal and, for types $A_{n-1}$ and
$B_n$, log-concave. Write $s = t^2$. Then
$$t^{-n/2}L_{W,X}(t) = \prod_{k=1}^{n/2} Q_k(s)$$ where $Q_k(s)$ is
either (for $W(A_{n-1})$)
$$\frac{s^{2k-1} - 1}{s-1} = 1 + s + \cdots + s^{2k-2}$$
or (for $W(B_n)$)
$$\frac{s^{4k-2}-1}{s-1} = 1 + s + \cdots + s^{4k-3}$$
or (for $W(D_n)$) $$\frac{(1+s^{2k-2})(s^{2k-1} -1)}{s-1} = 1 + s +
\cdots + s^{2k-3} + 2s^{2k-2} + s^{2k-1} + \cdots + s^{4k-3}.$$ For
all three of these, the sequence of coefficients of $Q_k(s)$ is
symmetric and unimodal with non-negative coefficients. Proposition 1
of \cite{stanley} states that the product of any such polynomials is
also symmetric and unimodal with non-negative coefficients. For the
first two of these cases, corresponding to types $A_{n-1}$ and
$B_n$, the sequence of coefficients of $Q_k(s)$ is always a
non-negative log-concave sequence with no internal zero
coefficients. Proposition 2 of \cite{stanley} states that the
product of any such polynomials is also log-concave with no internal
zero coefficients. Theorem \ref{logc} now follows.
\end{proof}\\

Note that $L_{W(D_n),X}(t)$ is not log-concave in general. For
example, the length profile for $X$, the set of involutions with no
1-cycles in $W(D_4)$, is $[2,2,4,2,2]$, and $2^2 < 2 \times 4$. So
the sequence is not log-concave.

\end{document}